%
%
\documentclass{article}

\usepackage{amsmath,amsthm,amsopn,amstext,amscd,amsfonts,amssymb,verbatim}
\usepackage[numbers]{natbib}

\usepackage{graphicx}
\usepackage{verbatim,color}
\usepackage{caption}
\usepackage{subcaption}
\usepackage{tcolorbox}
\usepackage{mathtools}

\def\tr{\mathop{\rm tr}\nolimits}

\def\im {\mathop{\rm Im}\nolimits}
\def\re{\mathop{\rm Re}\nolimits}
\def\etr{\mathop{\rm etr}\nolimits}

\renewenvironment{abstract}
                 {\vspace{6pt}
                  \begin{center}
                  \begin{minipage}{5in}
                  \centerline{\textbf{Abstract}}
                  \noindent\ignorespaces
                 }
                 {\end{minipage}\end{center}}
                 
\newtheorem{theorem}{\textbf{Theorem}}[section]
\newtheorem{corollary}{\textbf{Corollary}}[section]

\newtheorem{proposition}{\textbf{Proposition}}[section]
\theoremstyle{definition}
\newtheorem{definition}{\textbf{Definition}}[section]
\newtheorem{example}{\textbf{Example}}[section]
\newtheorem{remark}{\textbf{Remark}}[section]

\setlength{\textheight}{21.6cm} \setlength{\textwidth}{14cm} \setlength{\oddsidemargin}{1cm}
\setlength{\evensidemargin}{1cm}

\title{\Large \textbf{Matrix variate $p$-value in MANOVA}}
\author{
  \textbf{Jos\'e A. D\'{\i}az-Garc\'{\i}a} \thanks{Corresponding author\newline
   {\bf Key words.} MANOVA, p-value, distribution function, hypergeometric functions with matrix argument, Jack polynomials, beta distributions type I and II, real normed division algebras.\newline
    2000 Mathematical Subject Classification. 60E05; 62E15; 15A23; 15B52}\\
  {\normalsize Universidad Aut\'onoma de Chihuahua} \\
  {\normalsize Facultad de Zootecnia y Ecolog\'{\i}a} \\
  {\normalsize Perif\'erico Francisco R. Almada Km 1, Zootecnia} \\
  {\normalsize 33820 Chihuahua, Chihuahua, M\'exico}\\
  {\normalsize E-mail: jadiaz@uach.mx}\\
  \textbf{Francisco J. Caro-Lopera}\\
  {\normalsize University of Medellin} \\
  {\normalsize Faculty of Basic Sciences} \\
  {\normalsize Carrera 87 No.30-65} \\
  {\normalsize Medell\'{\i}n, Colombia} \\
  {\normalsize E-mail: fjcaro@udemedellin.edu.co} \\[2ex]
}
\date{}
\begin{document}
\maketitle

\begin{abstract}
The distribution functions of the matricvariate beta type I and II distributions are studied under real normed division algebras. The unified approach for real, complex, quaternions and octonions, also considers general properties and highlights the potential application of the exact emerging upper probabilities $P(\mathbf{B} > \mathbf{\Omega})$ and $P(\mathbf{F} > \mathbf{\nabla})$. In this setting, the matrix probabilities arise naturally as univariate extensions into the so termed matrix variate $p$-values. Then, a new criterion for the general multivariate linear hypothesis test can be proposed under a simple heuristic interpretation. The new technique can be applied in a number of classical statistical tests. In particular, the multivariate analysis of variance (MANOVA) is illustrated in two well known scenarios, and the performance of our exact method is compared with the existing approximated criteria.
\end{abstract}                 

\section{Introduction}\label{sec:1}

The multivariate linear model takes the form
$$
  \mathbf{Y} = \mathbf{X} \mathbb{B} + \mathbf{E},
$$
where $\mathbf{Y} \in \Re^{n \times m}$ and $\mathbf{E} \in \Re^{n \times m}$ are random matrices, $\mathbf{X} \in \Re^{n \times p}$ is the design matrix or the regression matrix of rank $r \leq p$ and  $n \geq m + r$ ; and $\mathbb{B} \in \Re^{p \times m}$ involves the unknown  parameters termed regression coefficients. We shall assume that $\mathbf{E} \sim \mathcal{N}_{n \times m}(\mathbf{0}, \mathbf{I}_{n} \otimes \mathbf{\Sigma})$ then $\mathbf{Y} \sim
\mathcal{N}_{n \times m}(\mathbf{X}\mathbb{B}, \mathbf{I}_{n} \otimes \mathbf{\Sigma})$, see \citet[p. 430]{mh:05}). Here $\otimes$ denotes the Kronecker product; where $\mathbf{\Sigma} \in \Re^{m \times m}$, $\mathbf{\Sigma} > \mathbf{0}$ (positive definite matrix). Given $\mathbf{N} \in \Re^{q \times n}$ of known constants, then for estimable $\mathbf{M}\mathbb{B}$, the maximum likelihood or the least square estimate of $\mathbf{N}\mathbb{B}$ is given by
$$
  \widehat{\mathbf{N}\mathbb{B}} \equiv \mathbf{N}\widehat{\mathbb{B}} = \mathbf{N}
  (\mathbf{X}'\mathbf{X})^{-}\mathbf{X}'\mathbf{Y}= \mathbf{N}
  \mathbf{X}^{+}\mathbf{Y},
$$
where $\mathbf{A}^{-}$ is any generlised inverse of $\mathbf{A}$ (this is, $\mathbf{A} = \mathbf{AA}^{-}\mathbf{A}$) and $\mathbf{X}^{+}$ is the Moore-Penrose generalised inverse of $\mathbf{X}$.

We focus on testing the general multivariate linear hypothesis
\begin{equation}\label{hypo}
  H_{0}: \mathbf{C}\mathbb{B}\mathbf{M} =\mathbf{H} \quad vs \quad H_{a}: \mathbf{C}\mathbb{B}\mathbf{M} \neq
  \mathbf{H},
\end{equation}
where $\mathbf{C} \in \Re^{q \times p}$  of rank $q \leq p$, $\mathbf{M} \in \Re^{m \times g}$ of rank $g \leq m$ and $\mathbf{H} \in \Re^{q \times g}$, of rank = $\min(q,g)$ are matrices of known constants. The matrix $\mathbf{C}$ determines the hypothesis among the elements of the parameter matrix columns, while the matrix $\mathbf{M}$ allows hypothesis among the different response parameters. The matrix $\mathbf{M}$ plays a role in profile analysis, for example; in ordinary hypothesis test it is taken to be the identity matrix, $\mathbf{M} = \mathbf{I}_{m}$. 

The matrix of sum of squares and sum of products due to the hypothesis is given by
$$
  \mathbf{S}_{\mathbf{H}} = (\mathbf{C}\widehat{\mathbb{B}}\mathbf{M} - \mathbf{H})'(\mathbf{C}(\mathbf{X}'
  \mathbf{X})^{-}\mathbf{C}')^{-1}(\mathbf{C}\widehat{\mathbb{B}}\mathbf{M} - \mathbf{H}).
$$
where $\mathbf{A}'$ denotes the transpose of $\mathbf{A}$. The matrix of sums of squares and sums of products due to the error is 
$$
 \mathbf{S}_{\mathbf{E}} =  \mathbf{M}'\mathbf{Y}'\left(\mathbf{I}_{n} - \mathbf{XX}^{-}\right)\mathbf{YM}.
$$
where under the null hypothesis $H_{0}: \mathbf{C}\mathbb{B}\mathbf{M} = \mathbf{0}$, $\mathbf{S}_{\mathbf{H}} \in \Re^{g \times g}$ is Wishart distributed with $\nu_{H}$ degrees of freedom, $\mathbf{S}_{\mathbf{H}} \sim \mathcal{W}_{g}(\nu_{H}, \mathbf{M}'\mathbf{\Sigma M})$ and $\mathbf{S}_{\mathbf{E}} \sim \mathcal{W}_{g}(\nu_{E}, \mathbf{M}'\mathbf{\Sigma M})$. Specifically, $\nu_{H}$ and $\nu_{E}$ denote the degrees of freedom of the hypothesis and error, respectively.

Under the intersection union principle and the likelihood ratio test, \citet{r:57} and \citet{w:32} proposed diverse criteria for hypothesis testing (\ref{hypo}).

Now, let $\theta_{1}, \dots, \theta_{m}$ and $\lambda_{1}, \dots, \lambda_{m}$ be the eigenvalues of the matrices $\mathbf{S}_{\mathbf{H}}(\mathbf{S}_{\mathbf{H}}+\mathbf{S}_{\mathbf{E}})^{-1}$ and $\mathbf{S}_{\mathbf{H}}\mathbf{S}_{\mathbf{E}}^{-1}$,
respectively. Several authors have proposed a number of different criteria for testing the multivariate general linear hypothesis, see \citet{k:83},
\citet{re:02} and \citet{dgcl:08}. All these test statistics can be represented as \textbf{\textit{functions}} of the $s = \min(m, \nu_{H})$ non-zero eigenvalues $\lambda's$ and/or $\theta's$, taking in mind that $\lambda_{i} = \theta_{i}/(1 -\theta_{i})$ and $\theta_{i} = \lambda_{i}/(1 + \lambda_{i})$, $i = 1, \dots, s$. As pointed out by Pillai, 
\medskip
\begin{center} 
\begin{minipage}[c]{5in}
  \small{\textit{The choice of any specific function of the eigenvalues, as a basic for test criteria, has so far been made on additional considerations which are heuristic, see \citet{p:55}.}}
\end{minipage}
\end{center}
\medskip
In this heuristic setting, a motivation of extending the $p$-value into the matrix variate test, provides a natural arising of a new criterion for the general multivariate linear hypothesis test. Moreover, searching for a unified field theory, we shall find the distribution functions of $\mathbf{B}$ (matricvariate beta type I)  and $\mathbf{F}$ (matricvariate beta type II) for real normed division algebras, under null hypothesis $H_{o}$. Furthermore, the corresponding upper probabilities ($P(\mathbf{B} > \mathbf{\Omega})$ and $P(\mathbf{F} > \mathbf{\nabla})$, now enriched under a $p$-value meaning, are easily obtained by establishing some basic properties of the distributions of the matrices $\mathbf{B}$ and $\mathbf{F}$. This new approach can be applied to a number of classical tests, namely, for testing the general linear hypothesis (\ref{hypo}) in two MANOVA problems from the statistical literature.
 
\section{Preliminaries results}

Some basic results about real normed division algebras, jacobians, and multivariate gamma and beta functions are outlined. In adition, the matricvariate beta type I and II distributions on real normed division algebras are defined and two basic properties are studied.

\subsection{Real normed division algebras and multivariate functions}

A detailed discussion of real normed division algebras can be found in
\cite{b:02} and \cite{E:90}. For convenience, we shall introduce some conventions, although in
general we adhere to standard notation forms.

For our purposes: Let $\mathbb{F}$ be a field. An \emph{algebra}
$\mathfrak{F}$ over $\mathbb{F}$ is a pair $(\mathfrak{F};m)$, where $\mathfrak{F}$ is a
\emph{finite-dimensional vector space} over $\mathbb{F}$ and \emph{multiplication} $m :
\mathfrak{F} \times \mathfrak{F} \rightarrow A$ is an $\mathbb{F}$-bilinear map; that is, for
all $\lambda \in \mathbb{F},$ $x, y, z \in \mathfrak{F}$;
\begin{eqnarray*}
  m(x, \lambda y + z) &=& \lambda m(x; y) + m(x; z) \\
  m(\lambda x + y; z) &=& \lambda m(x; z) + m(y; z).
\end{eqnarray*}
Two algebras $(\mathfrak{F};m)$ and $(\mathfrak{E}; n)$ over $\mathbb{F}$ are said to be
\emph{isomorphic} if there is an invertible map $\phi: \mathfrak{F} \rightarrow \mathfrak{E}$
such that for all $x, y \in \mathfrak{F}$,
$$
  \phi(m(x, y)) = n(\phi(x), \phi(y)).
$$
By simplicity, we write $m(x; y) = xy$ for all $x, y \in \mathfrak{F}$.

Let $\mathfrak{F}$ be an algebra over $\mathbb{F}$. Then $\mathfrak{F}$ is
said to be
\begin{enumerate}
  \item \emph{alternative} if $x(xy) = (xx)y$ and $x(yy) = (xy)y$ for all $x, y \in \mathfrak{F}$,
  \item \emph{associative} if $x(yz) = (xy)z$ for all $x, y, z \in \mathfrak{F}$,
  \item \emph{commutative} if $xy = yx$ for all $x, y \in \mathfrak{F}$, and
  \item \emph{unital} if there is a $1 \in \mathfrak{F}$ such that $x1 = x = 1x$ for all $x \in \mathfrak{F}$.
\end{enumerate}
If $\mathfrak{F}$ is unital, then the identity 1 is uniquely determined.

An algebra $\mathfrak{F}$ over $\mathbb{F}$ is said to be a \emph{division
algebra} if $\mathfrak{F}$ is nonzero and $xy = 0_{\mathfrak{F}} \Rightarrow x =
0_{\mathfrak{F}}$ or $y = 0_{\mathfrak{F}}$ for all $x, y \in \mathfrak{F}$.

The term ``division algebra", comes from the following proposition, which
shows that, in such an algebra, left and right division can be unambiguously performed.

Let $\mathfrak{F}$ be an algebra over $\mathbb{F}$. Then $\mathfrak{F}$ is a
division algebra if, and only if, $\mathfrak{F}$ is nonzero and for all $a, b \in
\mathfrak{F}$, with $b \neq 0_{\mathfrak{F}}$, the equations $bx = a$ and $yb = a$ have unique
solutions $x, y \in \mathfrak{F}$.

In the sequel we assume $\mathbb{F} = \Re$ and consider classes of division
algebras over $\Re$ or ``\emph{real division algebras}" for short.

We introduce the algebras of \emph{real numbers} $\Re$, \emph{complex numbers}
$\mathfrak{C}$, \emph{quaternions} $\mathfrak{H}$ and \emph{octonions} $\mathfrak{O}$. Then, if
$\mathfrak{F}$ is an alternative real division algebra, then $\mathfrak{F}$ is isomorphic to
$\Re$, $\mathfrak{C}$, $\mathfrak{H}$ or $\mathfrak{O}$.

Let $\mathfrak{F}$ be a real division algebra with identity $1$. Then
$\mathfrak{F}$ is said to be \emph{normed} if there is an inner product $(\cdot, \cdot)$ on
$\mathfrak{F}$ such that
$$
  (xy, xy) = (x, x)(y, y) \qquad \mbox{for all } x, y \in \mathfrak{F}.
$$
If $\mathfrak{F}$ is a \emph{real normed division algebra}, then $\mathfrak{F}$ is isomorphic to
$\Re$, $\mathfrak{C}$, $\mathfrak{H}$ or $\mathfrak{O}$.

There are exactly four normed division algebras: real numbers ($\Re$), complex
numbers ($\mathfrak{C}$), quaternions ($\mathfrak{H}$) and octonions ($\mathfrak{O}$), see
\cite{b:02}. We take into account that, $\Re$, $\mathfrak{C}$,
$\mathfrak{H}$ and $\mathfrak{O}$ are the only normed division algebras; furthermore, they are
the only alternative division algebras.

Let $\mathfrak{F}$ be a division algebra over the real numbers. Then
$\mathfrak{F}$ has dimension either 1, 2, 4 or 8. In other branches of mathematics, the
parameters $\alpha = 2/\beta$ and $t = \beta/4$ are used, see \cite{er:05} and \cite{k:84},
respectively.

Finally, observe that

\medskip

\begin{tabular}{c}
  $\Re$ is a real commutative associative normed division algebras, \\
  $\mathfrak{C}$ is a commutative associative normed division algebras,\\
  $\mathfrak{H}$ is an associative normed division algebras, \\
  $\mathfrak{O}$ is an alternative normed division algebras. \\
\end{tabular}

Let ${\mathcal L}^{\beta}_{m,n}$ be the set of all $n \times m$ matrices of rank $m \leq n$
over $\mathfrak{F}$ with $m$ distinct positive singular values, where $\mathfrak{F}$ denotes a
\emph{real finite-dimensional normed division algebra}. In particular, let $GL(m,\mathfrak{F})$
be the space of all invertible $m \times m$ matrices over $\mathfrak{F}$. Let $\mathfrak{F}^{n
\times m}$ be the set of all $n \times m$ matrices over $\mathfrak{F}$. The dimension of
$\mathfrak{F}^{n \times m}$ over $\Re$ is $\beta mn$.

Let $\mathbf{A} \in \mathfrak{F}^{n \times m}$, then $\mathbf{A}^{*} =
\overline{\mathbf{A}}^{T}$ denotes the usual conjugate transpose. Denote by ${\mathfrak S}_{m}^{\beta}$ the real
vector space of all $\mathbf{S} \in \mathfrak{F}^{m \times m}$ such that $\mathbf{S} =
\mathbf{S}^{*}$. Let $\mathfrak{P}_{m}^{\beta}$ be the \emph{cone of positive definite
matrices} $\mathbf{S} \in \mathfrak{F}^{m \times m}$. Thus, $\mathfrak{P}_{m}^{\beta}$ consist
of all matrices $\mathbf{S} = \mathbf{X}^{*}\mathbf{X}$, with $\mathbf{X} \in
\mathfrak{L}^{\beta}_{m,n}$; then $\mathfrak{P}_{m}^{\beta}$ is an open subset of ${\mathfrak
S}_{m}^{\beta}$. Over $\Re$, ${\mathfrak S}_{m}^{\beta}$ consist of \emph{symmetric} matrices;
over $\mathfrak{C}$, \emph{Hermitian} matrices; over $\mathfrak{H}$, \emph{quaternionic
Hermitian} matrices (also termed \emph{self-dual matrices}) and over $\mathfrak{O}$,
\emph{octonionic Hermitian} matrices. Generically, the elements of $\mathfrak{S}_{m}^{\beta}$
are termed as \textbf{Hermitian matrices}, irrespective of the nature of $\mathfrak{F}$. The
dimension of $\mathfrak{S}_{m}^{\beta}$ over $\Re$ is $[m(m-1)\beta+2m]/2$. For any matrix $\mathbf{X} \in \mathfrak{F}^{n
\times m}$, $d\mathbf{X}$ denotes the\emph{ matrix of differentials} $(dx_{ij})$. Finally, we
define the \emph{measure} or volume element $(d\mathbf{X})$ when $\mathbf{X} \in
\mathfrak{F}^{m \times n}, \mathfrak{S}_{m}^{\beta}$, or $\mathfrak{D}_{m}^{\beta}$, see \cite{d:02}.

If $\mathbf{X} \in \mathfrak{F}^{n \times m}$ then $(d\mathbf{X})$ (the Lebesgue measure in
$\mathfrak{F}^{n \times m}$) denotes the exterior product of the $\beta mn$ functionally
independent variables
$$
  (d\mathbf{X}) = \bigwedge_{i = 1}^{n}\bigwedge_{j = 1}^{m}\bigwedge_{k =
  1}^{\beta}dx_{ij}^{(k)}.
$$
If $\mathbf{S} \in \mathfrak{S}_{m}^{\beta}$ then $(d\mathbf{S})$ (the Lebesgue measure in $\mathfrak{S}_{m}^{\beta}$) denotes the exterior product of the  $m(m-1)\beta/2 + m$
functionally independent variables,
$$
  (d\mathbf{S}) = \bigwedge_{i=1}^{m} ds_{ii}\bigwedge_{i < j}^{m}\bigwedge_{k = 1}^{\beta}ds_{ij}^{(k)}.
$$
Observe, that for the Lebesgue measure $(d\mathbf{S})$ defined thus, it is required that
$\mathbf{S} \in \mathfrak{P}_{m}^{\beta}$, that is, $\mathbf{S}$ must be a non singular
Hermitian matrix (Hermitian positive definite matrix). 

If $\mathbf{\Lambda} \in \mathfrak{D}_{m}^{\beta}$ then $(d\mathbf{\Lambda})$ (the
Lebesgue measure in $\mathfrak{D}_{m}^{\beta}$) denotes the exterior product
of the $\beta m$ functionally independent variables
$$
  (d\mathbf{\Lambda}) = \bigwedge_{i = 1}^{m}\bigwedge_{k = 1}^{\beta}d\lambda_{i}^{(k)}.
$$

Some Jacobians in the quaternionic case are obtained in \cite{lx:09}. We now cite some
Jacobians in terms of the parameter $\beta$, based on the works of \cite{k:84} and
\cite{d:02}.

\begin{proposition}\label{prop1} 
Let $\mathbf{X}$ and $\mathbf{Y} \in \mathfrak{S}_{m}^{\beta}$ be matrices of functionally independent variables. 
\begin{description}
  \item[i)] Let $\mathbf{Y} = \mathbf{AXA^{*}} + \mathbf{C}$, where $\mathbf{A} \in {\mathcal L}_{m,m}^{\beta} $ and
   $\mathbf{C} \in \mathfrak{S}_{m}^{\beta}$ are matrices of constants. Then
   \begin{equation}\label{hlt}
      (d\mathbf{Y}) = |\mathbf{A}^{*}\mathbf{A}|^{(m-1)\beta/2+1} (d\mathbf{X}).
   \end{equation}
  \item[ii)] Define $\mathbf{Y} = \mathbf{X}^{-1}$, Then
   \begin{equation}\label{hlt}
      (d\mathbf{Y}) = |\mathbf{X}|^{-(m-1)\beta-2} (d\mathbf{X}).
   \end{equation}
\end{description}
\end{proposition}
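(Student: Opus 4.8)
The plan is to obtain part ii) as an immediate consequence of part i), so that the substance lies in the congruence Jacobian of i). For part i) I would first remove the additive constant: the translation $\mathbf{X}\mapsto\mathbf{X}+\mathbf{C}$ is a shift of the real vector space $\mathfrak{S}_{m}^{\beta}$ whose differential is the identity, hence it has unit Jacobian, and we may take $\mathbf{C}=\mathbf{0}$. It then remains to evaluate the Jacobian of the $\Re$-linear map $L_{\mathbf{A}}:\mathbf{X}\mapsto\mathbf{A}\mathbf{X}\mathbf{A}^{*}$ on $\mathfrak{S}_{m}^{\beta}$. The decisive structural fact is multiplicativity: since $(\mathbf{A}\mathbf{B})\mathbf{X}(\mathbf{A}\mathbf{B})^{*}=\mathbf{A}(\mathbf{B}\mathbf{X}\mathbf{B}^{*})\mathbf{A}^{*}$, we have $L_{\mathbf{A}\mathbf{B}}=L_{\mathbf{A}}\circ L_{\mathbf{B}}$, whence the Jacobian $J(\mathbf{A})=|\det L_{\mathbf{A}}|$ obeys $J(\mathbf{A}\mathbf{B})=J(\mathbf{A})\,J(\mathbf{B})$. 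Because every $\mathbf{A}\in{\mathcal L}_{m,m}^{\beta}$ factors as a product of elementary (diagonal and unipotent shear) matrices, it suffices to verify the claimed exponent on these generators.

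The model computation is the diagonal one. For $\mathbf{A}=\diag(a_{1},\dots,a_{m})$ with positive real entries, the transform acts entrywise as $Y_{ij}=a_{i}X_{ij}a_{j}=a_{i}a_{j}X_{ij}$. Each of the $m$ real diagonal variables is scaled by $a_{i}^{2}$, while each of the $\binom{m}{2}$ off-diagonal positions carries $\beta$ functionally independent real components, every one scaled by $a_{i}a_{j}$. Collecting factors, and using that each index occurs in exactly $m-1$ unordered pairs, yields
$$\prod_{i=1}^{m}a_{i}^{2}\;\prod_{i<j}(a_{i}a_{j})^{\beta}=\prod_{i=1}^{m}a_{i}^{(m-1)\beta+2}=\left(\prod_{i=1}^{m}a_{i}^{2}\right)^{(m-1)\beta/2+1}=|\mathbf{A}^{*}\mathbf{A}|^{(m-1)\beta/2+1}.$$
The unipotent shear generators are dispatched by the same triangular bookkeeping, and multiplicativity then delivers the stated formula for general $\mathbf{A}$.

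For part ii), I would differentiate the identity $\mathbf{X}\mathbf{X}^{-1}=\mathbf{I}$ to obtain $(d\mathbf{X})\mathbf{X}^{-1}+\mathbf{X}(d\mathbf{Y})=\mathbf{0}$, that is, $d\mathbf{Y}=-\mathbf{X}^{-1}(d\mathbf{X})\mathbf{X}^{-1}$. Since $\mathbf{X}$ is Hermitian, so is $\mathbf{X}^{-1}$, and this is precisely the congruence $d\mathbf{X}\mapsto\mathbf{A}(d\mathbf{X})\mathbf{A}^{*}$ with $\mathbf{A}=\mathbf{X}^{-1}$, up to the overall sign map $\mathbf{Z}\mapsto-\mathbf{Z}$ whose Jacobian has absolute value $1$. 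Applying part i) and using $|\mathbf{X}^{-1}\mathbf{X}^{-1}|=|\mathbf{X}|^{-2}$ then gives
$$(d\mathbf{Y})=|\mathbf{X}^{-1}\mathbf{X}^{-1}|^{(m-1)\beta/2+1}(d\mathbf{X})=|\mathbf{X}|^{-(m-1)\beta-2}(d\mathbf{X}),$$
as required.

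I expect the main obstacle to be algebraic rather than analytic: making the notion of $|\cdot|$ (the Hermitian determinant) and the reduction to elementary factors rigorous uniformly across $\mathfrak{F}=\Re,\mathfrak{C},\mathfrak{H},\mathfrak{O}$. Non-commutativity of $\mathfrak{H}$ forces care in the entrywise scaling, since the factor $a_{i}a_{j}$ must be read with the correct ordering, and non-associativity of $\mathfrak{O}$ obstructs the naive elementary-matrix factorization and even the unambiguous meaning of $\mathbf{A}\mathbf{X}\mathbf{A}^{*}$; here one leans on the cited treatments \cite{k:84,d:02}, where $\beta$ functions as a formal dimension parameter and the octonionic case is handled within its admissible range. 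Once $|\cdot|$ is fixed and multiplicativity is secured on generators, the exponent $(m-1)\beta/2+1$ is pinned down entirely by the diagonal computation above.
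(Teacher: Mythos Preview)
The paper does not actually prove this proposition: it is stated without proof and attributed to \cite{k:84} and \cite{d:02} (with the quaternionic case also referenced to \cite{lx:09}). There is therefore no ``paper's own proof'' to compare against; your proposal supplies an argument where the paper supplies only a citation.

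That said, your sketch is the standard route to these Jacobians and is correct in the associative cases $\mathfrak{F}=\Re,\mathfrak{C},\mathfrak{H}$. The reduction of i) to the diagonal case via multiplicativity $J(\mathbf{A}\mathbf{B})=J(\mathbf{A})J(\mathbf{B})$ together with a factorization into diagonal and unipotent matrices is exactly how Muirhead handles $\beta=1$, and your diagonal count correctly produces the exponent $(m-1)\beta/2+1$. Deriving ii) from i) via $d(\mathbf{X}^{-1})=-\mathbf{X}^{-1}(d\mathbf{X})\mathbf{X}^{-1}$ is likewise the standard device and is clean.

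Your closing caveat is the right one to flag and is not merely cosmetic. The multiplicativity step $L_{\mathbf{AB}}=L_{\mathbf{A}}\circ L_{\mathbf{B}}$ uses associativity of the matrix product, and the elementary-matrix factorization presupposes an associative ambient algebra; both genuinely fail over $\mathfrak{O}$. In the octonionic setting one is restricted (as in the cited references) to small $m$ where the Jordan-algebraic machinery on $\mathfrak{S}_{m}^{\beta}$ makes sense, and the Jacobian is obtained by other means rather than by your factorization argument. So your proof is complete for $\beta\in\{1,2,4\}$, while for $\beta=8$ it is at best a heuristic that must defer to the sources the paper cites.
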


In addition, $\Gamma^{\beta}_{m}[a]$ denotes the \emph{multivariate Gamma function} for the
space $\mathfrak{S}_{m}^{\beta}$, which is defined by
\begin{eqnarray}
  \Gamma_{m}^{\beta}[a] &=& \displaystyle\int_{\mathbf{A} \in \mathfrak{P}_{m}^{\beta}}
  \etr\{-\mathbf{A}\} |\mathbf{A}|^{a-(m-1)\beta/2 - 1}(d\mathbf{A}) \nonumber\\
&=& \pi^{m(m-1)\beta/4}\displaystyle\prod_{i=1}^{m} \Gamma[a-(i-1)\beta/2]
\end{eqnarray}
where $\etr\{\cdot\} = \exp\{\tr(\cdot)\}$, $|\cdot|$ denotes the determinant and $\re(a)
> (m-1)\beta/2$, see \cite{gr:87}. 
The generalised Pochhammer symbol of
weight $\kappa$, defined as
$$
  [a]_{\kappa}^{\beta} = \prod_{i = 1}^{m}(a-(i-1)\beta/2)_{k_{i}} = \frac{\pi^{m(m-1)\beta/4}
    \displaystyle\prod_{i=1}^{m} \Gamma[a + k_{i} -(i-1)\beta/2]}{\Gamma_{m}^{\beta}[a]},
$$
where $\re(a) > (m-1)\beta/2 - k_{m}$ and
$$
  (a)_{i} = a (a+1)\cdots(a+i-1),
$$
is the standard Pochhammer symbol.

From \cite[p. 480]{h:55} the \emph{multivariate beta function} for the space
$\mathfrak{S}^{\beta}_{m}$, can be defined as

\begin{eqnarray}
    \mathcal{B}_{m}^{\beta}[a,b] &=& \int_{\mathbf{0}<\mathbf{S}<\mathbf{I}_{m}}
    |\mathbf{S}|^{a-(m-1)\beta/2-1} |\mathbf{I}_{m} - \mathbf{S}|^{b-(m-1)\beta/2-1}
    (d\mathbf{S}) \\ \label{beta1}
&=& \int_{\mathbf{R} \in \mathfrak{P}_{m}^{\beta}} |\mathbf{R}|^{a-(m-1)\beta/2-1}
    |\mathbf{I}_{m} + \mathbf{R}|^{-(a+b)} (d\mathbf{R}) \\ \label{beta2}
&=& \frac{\Gamma_{m}^{\beta}[a] \Gamma_{m}^{\beta}[b]}{\Gamma_{m}^{\beta}[a+b]},\label{beta3} 
\end{eqnarray}
where (\ref{beta1}) is obtained making the change of variable $\mathbf{R} = (\mathbf{I}-\mathbf{S})^{-1} -\mathbf{I}$ and, by Proposition \ref{prop1} ii), $(d\mathbf{S}) = |\mathbf{I}_{m}+\mathbf{R}|^{-(m-1)\beta-2}(d\mathbf{R})$, with Re$(a) > (m-1)\beta/2$ and Re$(b)> (m-1)\beta/2$. In addition, as consequence of (\ref{beta3}), we have that  $ \mathcal{B}_{m}^{\beta}[a,b] =  \mathcal{B}_{m}^{\beta}[b,a]$.

Finally, consider the definition and basic properties of the hypergeometric function with one matrix argument for real normed division algebras.

Fix complex numbers $a_{1}, \dots, a_{p}$ and $b_{1}, \dots, b_{q}$, and for all $1 \leq i \leq
q$ and $1 \leq j \leq m$ do not allow $-b_{i} + (j-1)\beta/2$ to be a nonnegative integer. Then
the \emph{hypergeometric function with one matrix argument} ${}_{p}F_{q}^{\beta}$ is defined to
be the real-analytic function on $\mathfrak{S}_{m}^{\beta}$ given by the series
\begin{equation}\label{fhoa}
    {}_{p}F_{q}^{\beta}(a_{1}, \dots, a_{p};b_{1}, \dots, b_{q}; \mathbf{X}) = \sum_{k=0}^{\infty}\sum_{\kappa}
  \frac{[a_{1}]_{\kappa}^{\beta} \cdots [a_{p}]_{\kappa}^{\beta}}{[b_{1}]_{\kappa}^{\beta} \cdots
  [b_{q}]_{\kappa}^{\beta}} \ \frac{C_{\kappa}^{\beta}(\mathbf{X})}{k!},
\end{equation}
where $C_{\kappa}^{\beta}(\mathbf{X})$ denotes the Jack polynomials, \citet{S97} also termed zonal polynomials, see \citet[Section 5.]{gr:87}.

In addition, for the convergence of hypergeometric series we have:
\begin{enumerate}
  \item If $p \leq q$ then the hypergeometric series (\ref{fhoa}) converges absolutely
    for all $\mathbf{X} \in \mathfrak{S}_{m}^{\beta}$.
  \item If $p = q+1$ then the series (\ref{fhoa}) converges absolutely for $||\mathbf{X}||=
  \max\{|\lambda_{i}|: i = 1, \dots, m \} < 1$, and diverges for $||\mathbf{X}|| >
    1$, where $\lambda_{1}, \dots \lambda_{m}$ are the $i$-th eigenvalues of $\mathbf{X}
    \in \mathfrak{S}_{m}^{\beta}$.
  \item If $p > q$ then the series (\ref{fhoa}) diverges unless it terminates.
\end{enumerate}

The sum of hypergeometric series is studied in term of integral properties for all $\mathbf{X} \in \mathfrak{S}_{m}^{\beta}$; indeed, for all $\mathbf{X} \in
\mathfrak{S}_{m}^{\beta, \mathfrak{C}}$. Where $\mathfrak{S}_{m}^{\beta, \mathfrak{C}}$ denotes the complexification $\mathfrak{S}_{m}^{\beta, \mathfrak{C}} = \mathfrak{S}_{m}^{\beta} + i \mathfrak{S}_{m}^{\beta}$ of $\mathfrak{S}_{m}^{\beta}$. That is, $\mathfrak{S}_{m}^{\beta, \mathfrak{C}}$ consist of all matrices $\mathbf{Z} \in (\mathfrak{F^{\mathfrak{C}}})^{m \times m}$ of the form $\mathbf{Z} = \mathbf{X} + i\mathbf{Y}$, with $\mathbf{X}, \mathbf{Y} \in \mathfrak{S}_{m}^{\beta}$. We refer to $\mathbf{X} = \re(\mathbf{Z})$ and $\mathbf{Y} = \im(\mathbf{Z})$ as the \emph{real and imaginary parts} of $\mathbf{Z}$, respectively. The \emph{generalised right half-plane} $\mathbf{\Phi} = \mathfrak{P}_{m}^{\beta} + i \mathfrak{S}_{m}^{\beta}$ in $\mathfrak{S}_{m}^{\beta,\mathfrak{C}}$ consists of all $\mathbf{Z} \in \mathfrak{S}_{m}^{\beta,\mathfrak{C}}$ such that $\re(\mathbf{Z}) \in \mathfrak{P}_{m}^{\beta}$, see \cite[p. 801]{gr:87}.

A detailed study on the hypergeometric function with one matrix argument for real normed division algebras is presented in \cite[Section 6, pp. 803-810]{gr:87} and in \citet{c:63} and \citet[Section 7.3]{mh:05} in real case and \citet[section 4 and Section 8]{j:64} in real and complex cases, respectively.

\subsection{Beta type I and II distributions}

\begin{definition}
\begin{description}
  \item[i)] The random matrix $\mathbf{U} \in \mathfrak{P}_{m}^{\beta}$ is said to have a matricvariate beta type I distribution, with parameters  Re$(a) > (m-1)\beta/2$ and Re$(b)> (m-1)\beta/2$, if its density function with respect to Lebesgue measure $(d\mathbf{U})$ in $\mathfrak{P}_{m}^{\beta}$ is
      
      \begin{equation}\label{dfbI}
        dF_{\mathbf{U}}(\mathbf{U}) = \frac{1}{\mathcal{B}_{m}^{\beta}[a,b]}  |\mathbf{U}|^{a-(m-1)\beta/2-1} |\mathbf{I}_{m} - \mathbf{U}|^{b-(m-1)\beta/2-1} (d\mathbf{U}),
      \end{equation}
      $\mathbf{0}<\mathbf{U}<\mathbf{I}_{m}$. This fact shall be denoted as $\mathbf{U} \sim \mathfrak{B}_{m}^{\beta}(a,b)$.
  \item[ii)]  The random matrix $\mathbf{F} \in \mathfrak{P}_{m}^{\beta}$ is said to have a matricvariate beta type II distribution with parameters  Re$(a) > (m-1)\beta/2$ and Re$(b)> (m-1)\beta/2$, if its density function is
      
      \begin{equation}\label{dfbII}
        dF_{\mathbf{F}}(\mathbf{F}) = \frac{1}{\mathcal{B}_{m}^{\beta}[a,b]} |\mathbf{F}|^{a-(m-1)\beta/2-1}
        |\mathbf{I}_{m} + \mathbf{F}|^{-(a+b)} (d\mathbf{F}),
      \end{equation}
      which exist with respect to Lebesgue measure $(d\mathbf{F})$ in $\mathfrak{P}_{m}^{\beta}$. We shall write that $\mathbf{F} \sim \mathcal{F}_{m}^{\beta}(a,b)$. 
\end{description}
\end{definition}

Observe that explicit forms for the Lebesgue measure when $\mathbf{S} \in \mathfrak{P}_{m}^{\beta}$ can be obtained in terms of Cholesky and spectral decomposition, among other, see \citet[Eqs. (2.11) and (2.12)]{dg:14} in the general case and see \citet{dggf:05} for real case.

\begin{theorem}\label{teo1}
Assume that $\mathbf{S} = \mathbf{I} - \mathbf{U}$, where  $\mathbf{U} \sim \mathfrak{B}_{m}^{\beta}(a,b)$. Then $\mathbf{S} \sim \mathfrak{B}_{m}^{\beta}(b,a)$.
\end{theorem}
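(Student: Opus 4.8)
The plan is to derive the density of $\mathbf{S}$ by a direct change of variables in the beta type I density (\ref{dfbI}) of $\mathbf{U}$, and then to recognise the outcome as (\ref{dfbI}) with the parameters $a$ and $b$ interchanged.

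First I would invert the relation, writing $\mathbf{U} = \mathbf{I}_m - \mathbf{S}$, and determine the volume element. Since the transformation is affine with linear part the reflection $\mathbf{S} \mapsto -\mathbf{S}$ on $\mathfrak{S}_m^\beta$, each coordinate differential satisfies $dU = -\,dS$; substituting into the definition $(d\mathbf{S}) = \bigwedge_{i=1}^m ds_{ii}\bigwedge_{i<j}^m\bigwedge_{k=1}^\beta ds_{ij}^{(k)}$ flips the sign of every factor, so that the two measures agree, $(d\mathbf{U}) = (d\mathbf{S})$. I would also confirm that the support is preserved: $\mathbf{0} < \mathbf{U} < \mathbf{I}_m$ holds if and only if $\mathbf{0} < \mathbf{I}_m - \mathbf{S} < \mathbf{I}_m$, that is, if and only if $\mathbf{0} < \mathbf{S} < \mathbf{I}_m$.

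Next I would substitute $\mathbf{U} = \mathbf{I}_m - \mathbf{S}$ into (\ref{dfbI}), which sends $|\mathbf{U}|^{a-(m-1)\beta/2-1}$ to $|\mathbf{I}_m - \mathbf{S}|^{a-(m-1)\beta/2-1}$ and $|\mathbf{I}_m - \mathbf{U}|^{b-(m-1)\beta/2-1}$ to $|\mathbf{S}|^{b-(m-1)\beta/2-1}$. Using $(d\mathbf{U}) = (d\mathbf{S})$, the induced density of $\mathbf{S}$ on $\mathbf{0} < \mathbf{S} < \mathbf{I}_m$ is
\begin{equation*}
  dF_{\mathbf{S}}(\mathbf{S}) = \frac{1}{\mathcal{B}_m^\beta[a,b]}\,|\mathbf{S}|^{b-(m-1)\beta/2-1}\,|\mathbf{I}_m - \mathbf{S}|^{a-(m-1)\beta/2-1}\,(d\mathbf{S}).
\end{equation*}
Finally, invoking the symmetry $\mathcal{B}_m^\beta[a,b] = \mathcal{B}_m^\beta[b,a]$ recorded after (\ref{beta3}), the normalising constant equals $1/\mathcal{B}_m^\beta[b,a]$, while the two determinantal exponents are precisely those of (\ref{dfbI}) with $a$ and $b$ exchanged; this identifies the law of $\mathbf{S}$ as $\mathfrak{B}_m^\beta(b,a)$.

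The computation is routine, so I do not anticipate a genuine obstacle. The only points I would be careful to justify rather than assume are the two verified above, namely that the reflection $\mathbf{U} = \mathbf{I}_m - \mathbf{S}$ has unit Jacobian (so no extra determinantal weight appears) and that it maps the region $\mathbf{0} < \mathbf{U} < \mathbf{I}_m$ onto itself; with these in hand the symmetry of $\mathcal{B}_m^\beta$ supplies the correct normalisation for the parameter-swapped density.
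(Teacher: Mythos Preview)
Your proposal is correct and follows exactly the same approach as the paper's proof, which consists of the single observation that $\mathbf{U} = \mathbf{I} - \mathbf{S}$ and $(d\mathbf{U}) = (d\mathbf{S})$. You have simply spelled out the details (support preservation, the substitution into the density, and the symmetry $\mathcal{B}_m^\beta[a,b] = \mathcal{B}_m^\beta[b,a]$) that the paper leaves implicit.
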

\begin{proof}
The proof follows by observing that $\mathbf{U} = \mathbf{I} -\mathbf{S}$ and $(d\mathbf{U}) = (d\mathbf{S})$. 
\end{proof}

\begin{theorem}\label{teo2}
Suppose that $\mathbf{F} \sim \mathcal{F}_{m}^{\beta}(a,b)$ and define $\mathbf{R} = \mathbf{F}^{-1}$. Then $\mathbf{R} \sim \mathcal{F}_{m}^{\beta}(b,a)$.
\end{theorem}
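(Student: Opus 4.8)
The plan is to carry out a direct change of variables in the density (\ref{dfbII}) using the matrix-inversion Jacobian supplied by Proposition \ref{prop1}. First I would set $\mathbf{F} = \mathbf{R}^{-1}$ and invoke Proposition \ref{prop1} ii) to obtain $(d\mathbf{F}) = |\mathbf{R}|^{-(m-1)\beta-2}(d\mathbf{R})$. Substituting $\mathbf{F} = \mathbf{R}^{-1}$ into (\ref{dfbII}) then only requires rewriting the two determinantal factors in terms of $\mathbf{R}$. For the first factor this is immediate, since $|\mathbf{F}| = |\mathbf{R}^{-1}| = |\mathbf{R}|^{-1}$; for the second I would use the factorization $|\mathbf{I}_{m} + \mathbf{R}^{-1}| = |\mathbf{R}^{-1}(\mathbf{R} + \mathbf{I}_{m})| = |\mathbf{R}|^{-1}|\mathbf{I}_{m} + \mathbf{R}|$, valid on the cone $\mathfrak{P}_{m}^{\beta}$.

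Next I would collect all powers of $|\mathbf{R}|$ coming from the three sources: the exponent $-(a-(m-1)\beta/2-1)$ from $|\mathbf{F}|^{a-(m-1)\beta/2-1}$, the exponent $+(a+b)$ from $|\mathbf{I}_{m}+\mathbf{R}^{-1}|^{-(a+b)}$, and the exponent $-(m-1)\beta-2$ from the Jacobian. A short bookkeeping yields the combined exponent $-a+(m-1)\beta/2+1+(a+b)-(m-1)\beta-2 = b-(m-1)\beta/2-1$, while the factor $|\mathbf{I}_{m}+\mathbf{R}|^{-(a+b)}$ survives untouched. The transformed density is therefore proportional to $|\mathbf{R}|^{\,b-(m-1)\beta/2-1}\,|\mathbf{I}_{m}+\mathbf{R}|^{-(a+b)}$, which is precisely the functional shape of an $\mathcal{F}_{m}^{\beta}(b,a)$ density.

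Finally, to pin down the normalizing constant I would appeal to the symmetry $\mathcal{B}_{m}^{\beta}[a,b] = \mathcal{B}_{m}^{\beta}[b,a]$ recorded just after (\ref{beta3}); consequently the prefactor $1/\mathcal{B}_{m}^{\beta}[a,b]$ already coincides with $1/\mathcal{B}_{m}^{\beta}[b,a]$, the correct constant for $\mathcal{F}_{m}^{\beta}(b,a)$. Matching both the kernel and the constant establishes $\mathbf{R} \sim \mathcal{F}_{m}^{\beta}(b,a)$, and along the way confirms integrability (hence that the expression is a genuine density). I expect the only genuinely delicate point to be justifying the determinant factorization $|\mathbf{I}_{m}+\mathbf{R}^{-1}| = |\mathbf{R}|^{-1}|\mathbf{I}_{m}+\mathbf{R}|$ uniformly across the four algebras, since for $\mathfrak{H}$ and $\mathfrak{O}$ the determinant on $\mathfrak{S}_{m}^{\beta}$ is defined through the real/Moore construction rather than an ordinary cofactor expansion; once multiplicativity of that determinant on the relevant Hermitian matrices is granted, everything else reduces to the exponent arithmetic above.
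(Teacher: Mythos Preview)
Your proposal is correct and follows exactly the approach of the paper's proof, which consists solely of the observation $\mathbf{F}=\mathbf{R}^{-1}$ together with the Jacobian $(d\mathbf{F})=|\mathbf{R}|^{-(m-1)\beta-2}(d\mathbf{R})$ from Proposition~\ref{prop1}~ii). You have simply filled in the exponent bookkeeping and the normalizing-constant check that the paper leaves implicit; your caveat about multiplicativity of the determinant over $\mathfrak{H}$ and $\mathfrak{O}$ is a reasonable aside, but within the paper's framework this is taken for granted.
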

\begin{proof}
This follows by noting that $\mathbf{F} = \mathbf{R}^{-1}$. By Proposition \ref{prop1} ii) we have that $(d\mathbf{F}) = |\mathbf{R}|^{-(m-1)\beta-2}(d\mathbf{R})$. 
\end{proof}

\section{Main results}

Our main goal is to find the upper probabilities $P(\mathbf{S} > \mathbf{\Omega})$ when $\mathbf{S}$ has a matricvariate beta type I and II distributions and $\mathbf{\Omega} \in \mathfrak{P}_{m}^{\beta}$.
First we shall study their corresponding distribution functions $F_{\mathbf{S}}(\mathbf{S}) = P(\mathbf{S} < \mathbf{\Omega})$ and then we obtain $P(\mathbf{S} > \mathbf{\Omega})$. Unfortunately as is established in \citet{c:63},
\medskip
\begin{center} 
\begin{minipage}[c]{5in}
  \small{\textit{for $m \geq 2$, $P(\mathbf{S} < \mathbf{\Omega}) \neq 1- P(\mathbf{S} > \mathbf{\Omega})$, since the set of $\mathbf{S}$ where neither of the relations $\mathbf{S} < \mathbf{\Omega}$ nor $\mathbf{S} > \mathbf{\Omega}$ holds is not of measure zero. \textbf{The complementary probabilities $P(\mathbf{S} > \mathbf{\Omega})$)  seem difficult to evaluate}. Also see \citet[p. 421]{mh:05}.}}
\end{minipage}
\end{center}
\medskip
For the beta type I distribution, the real incomplete beta function was obtained by \citet{c:63} (for $P(\mathbf{S} < \mathbf{\Omega})$ see \citet{ar:21}). Similarly, in the real case, the $P(\mathbf{S} > \mathbf{\Omega})$ was derived by \citet{ar:21} using a complex procedure, which is revisited in this work by elucidating a very simple  alternative approach. In terms of theorems \ref{teo1} and \ref{teo2} the corresponding expressions of $P(\mathbf{S} > \mathbf{\Omega})$ are straightforwardly obtained from their corresponding distribution functions in the real normed division algebra case.

\subsection{Matricvariate beta type I distribution}

\begin{theorem}\label{teo3}
  If $\mathbf{U}$ has the distribution (\ref{dfbI}), then the bounded lower probability of $\mathbf{U}$ is given by
  $$
    P(\mathbf{U} < \mathbf{\Omega}) = \frac{\mathcal{B}_{m}^{\beta}[a, (m-1)\beta/2 +1]}{\mathcal{B}_{m}^{\beta}[a,b]} |\mathbf{\Omega}|^{a} \hspace{5cm}
  $$
  $$
    \hspace{3cm}
    \times {}_{2}F_{1}^{\beta}(a,-b+(m-1)\beta/2+1;a+(m-1)\beta/2+1;\mathbf{\Omega}),
  $$
  where $\mathbf{0} < \mathbf{\Omega} < \mathbf{I}_{m}$. 
\end{theorem}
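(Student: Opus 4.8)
The plan is to evaluate the normalized incomplete integral
$$
  P(\mathbf{U} < \mathbf{\Omega}) = \frac{1}{\mathcal{B}_m^\beta[a,b]} \int_{\mathbf{0}<\mathbf{U}<\mathbf{\Omega}} |\mathbf{U}|^{a-(m-1)\beta/2-1}\,|\mathbf{I}_m - \mathbf{U}|^{b-(m-1)\beta/2-1}\,(d\mathbf{U})
$$
by standardizing the region $\mathbf{0}<\mathbf{U}<\mathbf{\Omega}$ to the unit interval $\mathbf{0}<\mathbf{V}<\mathbf{I}_m$. Since $\mathbf{\Omega}\in\mathfrak{P}_m^\beta$ has a Hermitian positive definite square root $\mathbf{\Omega}^{1/2}$, I would set $\mathbf{U} = \mathbf{\Omega}^{1/2}\mathbf{V}\mathbf{\Omega}^{1/2}$, a bijection of $\{\mathbf{0}<\mathbf{V}<\mathbf{I}_m\}$ onto $\{\mathbf{0}<\mathbf{U}<\mathbf{\Omega}\}$. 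Proposition \ref{prop1} i), with $\mathbf{A}=\mathbf{\Omega}^{1/2}$ and $\mathbf{C}=\mathbf{0}$, gives the Jacobian $(d\mathbf{U}) = |\mathbf{\Omega}|^{(m-1)\beta/2+1}(d\mathbf{V})$. Using $|\mathbf{U}| = |\mathbf{\Omega}|\,|\mathbf{V}|$, the powers of $|\mathbf{\Omega}|$ collapse to $|\mathbf{\Omega}|^{a-(m-1)\beta/2-1}\,|\mathbf{\Omega}|^{(m-1)\beta/2+1} = |\mathbf{\Omega}|^{a}$, so that
$$
  P(\mathbf{U} < \mathbf{\Omega}) = \frac{|\mathbf{\Omega}|^{a}}{\mathcal{B}_m^\beta[a,b]} \int_{\mathbf{0}<\mathbf{V}<\mathbf{I}_m} |\mathbf{V}|^{a-(m-1)\beta/2-1}\,|\mathbf{I}_m - \mathbf{\Omega}^{1/2}\mathbf{V}\mathbf{\Omega}^{1/2}|^{b-(m-1)\beta/2-1}\,(d\mathbf{V}).
$$

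The $|\mathbf{\Omega}|^a$ prefactor already matches the target, so it remains to identify the integral with $\mathcal{B}_m^\beta[a,(m-1)\beta/2+1]\,{}_2F_1^\beta(\cdots)$. To this end I would expand the surviving determinant power by the matrix binomial series, that is, the ${}_1F_0^\beta$ case of (\ref{fhoa}),
$$
  |\mathbf{I}_m - \mathbf{\Omega}^{1/2}\mathbf{V}\mathbf{\Omega}^{1/2}|^{b-(m-1)\beta/2-1} = \sum_{k=0}^\infty\sum_\kappa \frac{[-b+(m-1)\beta/2+1]_\kappa^\beta}{k!}\,C_\kappa^\beta(\mathbf{\Omega}^{1/2}\mathbf{V}\mathbf{\Omega}^{1/2}).
$$
Since $\mathbf{\Omega}^{1/2}\mathbf{V}\mathbf{\Omega}^{1/2} < \mathbf{\Omega} < \mathbf{I}_m$, the argument has all eigenvalues bounded by those of $\mathbf{\Omega}$ and hence strictly below $1$ throughout the region, so the series converges absolutely and uniformly and term-by-term integration is licit. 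Interchanging summation and integration and using the invariance $C_\kappa^\beta(\mathbf{\Omega}^{1/2}\mathbf{V}\mathbf{\Omega}^{1/2}) = C_\kappa^\beta(\mathbf{\Omega}\mathbf{V})$ reduces everything to the single family of integrals $\int_{\mathbf{0}<\mathbf{V}<\mathbf{I}_m}|\mathbf{V}|^{a-(m-1)\beta/2-1}C_\kappa^\beta(\mathbf{\Omega}\mathbf{V})\,(d\mathbf{V})$.

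The decisive ingredient is then the Jack-polynomial beta integral over $\mathfrak{S}_m^\beta$, which for these parameters reads
$$
  \int_{\mathbf{0}<\mathbf{V}<\mathbf{I}_m}|\mathbf{V}|^{a-(m-1)\beta/2-1}\,C_\kappa^\beta(\mathbf{\Omega}\mathbf{V})\,(d\mathbf{V}) = \mathcal{B}_m^\beta[a,(m-1)\beta/2+1]\,\frac{[a]_\kappa^\beta}{[a+(m-1)\beta/2+1]_\kappa^\beta}\,C_\kappa^\beta(\mathbf{\Omega}),
$$
the $c=(m-1)\beta/2+1$ specialization of the general incomplete beta/zonal integral, for which the factor $|\mathbf{I}_m-\mathbf{V}|^{c-(m-1)\beta/2-1}$ reduces to $1$. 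Substituting this back, the constant $\mathcal{B}_m^\beta[a,(m-1)\beta/2+1]$ factors out and the remaining double series over $k,\kappa$ is exactly $\sum_{k}\sum_\kappa [a]_\kappa^\beta[-b+(m-1)\beta/2+1]_\kappa^\beta\,C_\kappa^\beta(\mathbf{\Omega})/\bigl([a+(m-1)\beta/2+1]_\kappa^\beta\,k!\bigr)$, which by (\ref{fhoa}) equals ${}_2F_1^\beta(a,-b+(m-1)\beta/2+1;a+(m-1)\beta/2+1;\mathbf{\Omega})$; collecting factors yields the asserted formula. I expect the main obstacle to be precisely this last integral: its constant, the generalized Pochhammer ratio $[a]_\kappa^\beta/[a+(m-1)\beta/2+1]_\kappa^\beta$, must be pinned down exactly and shown to hold uniformly for $\beta = 1,2,4,8$, since it is what makes the series close up into ${}_2F_1^\beta$. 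A secondary point is rigorously justifying the interchange of summation and integration, which I would settle using the uniform bound $\|\mathbf{\Omega}^{1/2}\mathbf{V}\mathbf{\Omega}^{1/2}\| \le \lambda_{\max}(\mathbf{\Omega}) < 1$ noted above.
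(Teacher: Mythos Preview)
Your proof is correct and follows the standard route: the paper itself simply cites Eq.~(4.26) of \cite{dg:14} for this result rather than writing out a proof. Your change of variables $\mathbf{U}=\mathbf{\Omega}^{1/2}\mathbf{V}\mathbf{\Omega}^{1/2}$, ${}_{1}F_{0}^{\beta}$ expansion, and term-by-term integration via the Jack-polynomial beta integral is exactly the method the paper spells out in detail for the companion Theorem~\ref{teo4} on the type~II distribution (and is what the cited reference does), so the approaches coincide.
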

\begin{proof}
 This follows from Eq. (4.26) in \citet{dg:14}. In addition, as a consequence of the Euler relation (\citet{dg:14}), we have that
 
 \begin{equation}\label{k1}
   {}_{2}F_{1}^{\beta}(a,b;c;\mathbf{X}) = |\mathbf{I}_{m} - \mathbf{X}|^{c-a-b}{}_{2}F_{1}^{\beta}(c-a,c-b;c;\mathbf{X});
 \end{equation}
 alternatively, we obtain that
 $$
    P(\mathbf{U} < \mathbf{\Omega}) = \frac{\mathcal{B}_{m}^{\beta}[a, (m-1)\beta/2 +1]}{\mathcal{B}_{m}^{\beta}[a,b]} |\mathbf{\Omega}|^{a} |\mathbf{I}_{m} - \mathbf{\Omega}|^{b}\hspace{5cm}
 $$
 $$
    \hspace{3cm}
    \times {}_{2}F_{1}^{\beta}((m-1)\beta/2+1,a+b,a+(m-1)\beta/2+1;\mathbf{\Omega}).
 $$
 A third expression for the probability can be obtained from the following Euler relation
 
 \begin{equation}\label{k2}
   {}_{2}F_{1}^{\beta}(a,b;c;\mathbf{X}) = |\mathbf{I} - \mathbf{X}|^{-b}  {}_{2}F_{1}^{\beta}(c-a,b;c;-\mathbf{X}(\mathbf{I} - \mathbf{X})^{-1}).
 \end{equation}
 Hence
  $$
    P(\mathbf{U} < \mathbf{\Omega}) = \frac{\mathcal{B}_{m}^{\beta}[a, (m-1)\beta/2 +1]}{\mathcal{B}_{m}^{\beta}[a,b]} |\mathbf{\Omega}|^{a} |\mathbf{I}_{m} - \mathbf{\Omega}|^{b-(m-1)\beta/2-1}\hspace{5cm}
  $$
  $$
     \hspace{1cm}
     \times {}_{2}F_{1}^{\beta}((m-1)\beta/2+1,-b+(m-1)\beta/2+1;a+(m-1)\beta/2+1;-\mathbf{\Omega}(\mathbf{I}_{m} - \mathbf{\Omega})^{-1}).
  $$
\end{proof}
Now, in the prelude of the claimed solution of \citet{c:63} and the intricate derivation of \citet{ar:21}, the following unified field statements are also straightforward corollaries of the simplest lower probability.
\begin{corollary}\label{cor1}
  Assume that $\mathbf{U} \sim \mathfrak{B}_{m}^{\beta}(a,b)$ then
  $$
   P(\mathbf{U} > \mathbf{\Omega}) = \frac{\mathcal{B}_{m}^{\beta}[b, (m-1)\beta/2 +1]}{\mathcal{B}_{m}^{\beta}[a,b]} |\mathbf{I}_{m}-\mathbf{\Omega}|^{b} \hspace{5cm}
  $$
  \begin{equation}\label{pvBI1}
    \hspace{3cm}
    \times {}_{2}F_{1}^{\beta}(b,-a+(m-1)\beta/2+1;b+(m-1)\beta/2+1;\mathbf{I}_{m}-\mathbf{\Omega}),
  \end{equation}
  where $\mathbf{0} < \mathbf{\Omega} < \mathbf{I}_{m}$. Or alternatively
  $$
    P(\mathbf{U} > \mathbf{\Omega}) = \frac{\mathcal{B}_{m}^{\beta}[b, (m-1)\beta/2 +1]}{\mathcal{B}_{m}^{\beta}[a,b]} |\mathbf{I}_{m} - \mathbf{\Omega}|^{b} |\mathbf{\Omega}|^{a}\hspace{5cm}
  $$
  \begin{equation}\label{pvBI2}
        \hspace{3cm}
     \times {}_{2}F_{1}^{\beta}((m-1)\beta/2+1,a+b,b+(m-1)\beta/2+1;\mathbf{I}_{m}-\mathbf{\Omega}),
  \end{equation}
 where $\mathbf{0} < \mathbf{\Omega} < \mathbf{I}_{m}$. Or as
  $$
    P(\mathbf{U} > \mathbf{\Omega}) = \frac{\mathcal{B}_{m}^{\beta}[b, (m-1)\beta/2 +1]}{\mathcal{B}_{m}^{\beta}[a,b]} |\mathbf{I}_{m}-\mathbf{\Omega}|^{b} |\mathbf{\Omega}|^{a-(m-1)\beta/2-1}\hspace{5cm}
  $$
  
  \begin{equation}\label{pvBI3}
     \times {}_{2}F_{1}^{\beta}((m-1)\beta/2+1,-a+(m-1)\beta/2+1;b+(m-1)\beta/2+1;-(\mathbf{I}_{m} - \mathbf{\Omega})\mathbf{\Omega}^{-1}).
  \end{equation}
 
\end{corollary}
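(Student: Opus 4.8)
The plan is to obtain all three expressions from Theorem \ref{teo3} by exploiting the reflection $\mathbf{S} = \mathbf{I}_m - \mathbf{U}$ supplied by Theorem \ref{teo1}, which converts an upper probability for $\mathbf{U}$ into a lower probability for $\mathbf{S}$. The point is that the hard case, the complementary probability $P(\mathbf{U} > \mathbf{\Omega})$, is never computed directly; instead it is mapped onto the already-solved lower probability $P(\mathbf{S} < \cdot)$ with the parameters interchanged.

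First I would record the order-reversal identity on $\mathfrak{S}_m^{\beta}$. Since $\mathbf{U} - \mathbf{\Omega} = (\mathbf{I}_m - \mathbf{\Omega}) - (\mathbf{I}_m - \mathbf{U})$, the Loewner event $\{\mathbf{U} > \mathbf{\Omega}\}$ coincides with $\{\mathbf{I}_m - \mathbf{U} < \mathbf{I}_m - \mathbf{\Omega}\}$. Because $\mathbf{U} \mapsto \mathbf{S} = \mathbf{I}_m - \mathbf{U}$ is a measure-preserving bijection, $(d\mathbf{U}) = (d\mathbf{S})$ as in the proof of Theorem \ref{teo1}, and because $\mathbf{S} \sim \mathfrak{B}_m^{\beta}(b,a)$ by that same theorem, I obtain
$$
  P(\mathbf{U} > \mathbf{\Omega}) = P(\mathbf{S} < \mathbf{I}_m - \mathbf{\Omega}), \qquad \mathbf{S} \sim \mathfrak{B}_m^{\beta}(b,a).
$$
The hypothesis $\mathbf{0} < \mathbf{\Omega} < \mathbf{I}_m$ guarantees $\mathbf{0} < \mathbf{I}_m - \mathbf{\Omega} < \mathbf{I}_m$, so the new bound lies in the admissible domain of Theorem \ref{teo3}.

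Next I would apply Theorem \ref{teo3} to the right-hand side with the parameter pair $(a,b)$ replaced by $(b,a)$ and the bound $\mathbf{\Omega}$ replaced by $\mathbf{I}_m - \mathbf{\Omega}$. Under this substitution the prefactor $\mathcal{B}_m^{\beta}[a,(m-1)\beta/2+1]/\mathcal{B}_m^{\beta}[a,b]$ becomes $\mathcal{B}_m^{\beta}[b,(m-1)\beta/2+1]/\mathcal{B}_m^{\beta}[b,a]$, the term $|\mathbf{\Omega}|^{a}$ becomes $|\mathbf{I}_m - \mathbf{\Omega}|^{b}$, and the hypergeometric function acquires parameters $(b,-a+(m-1)\beta/2+1;b+(m-1)\beta/2+1)$ evaluated at $\mathbf{I}_m - \mathbf{\Omega}$. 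Rewriting the normalising constant by means of the symmetry $\mathcal{B}_m^{\beta}[b,a] = \mathcal{B}_m^{\beta}[a,b]$ recorded after Eq. (\ref{beta3}) yields exactly (\ref{pvBI1}).

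Finally, the two alternative forms (\ref{pvBI2}) and (\ref{pvBI3}) follow by feeding the resulting ${}_{2}F_{1}^{\beta}$ into the Euler relations (\ref{k1}) and (\ref{k2}), respectively, which is precisely the manoeuvre already used in the proof of Theorem \ref{teo3}, now carried out with the swapped parameters and the argument $\mathbf{I}_m - \mathbf{\Omega}$. I do not anticipate a genuine obstacle; the only step demanding care is the bookkeeping of the order reversal, namely verifying that the strict inequality $\mathbf{U} > \mathbf{\Omega}$ transports to $\mathbf{S} < \mathbf{I}_m - \mathbf{\Omega}$ across the whole support so that no boundary set of positive measure is gained or lost. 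Everything else is the uniform substitution $(a,b,\mathbf{\Omega}) \mapsto (b,a,\mathbf{I}_m - \mathbf{\Omega})$ together with the beta symmetry and the two Euler identities already in hand.
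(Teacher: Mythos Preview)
Your proposal is correct and mirrors the paper's own proof almost verbatim: the paper also reduces $P(\mathbf{U} > \mathbf{\Omega})$ to $P(\mathbf{I}_m - \mathbf{U} < \mathbf{I}_m - \mathbf{\Omega})$ via Theorem~\ref{teo1}, then invokes Theorem~\ref{teo3} with $(a,b,\mathbf{\Omega}) \mapsto (b,a,\mathbf{I}_m - \mathbf{\Omega})$, the alternative forms coming from the Euler relations already recorded there. Your version is in fact slightly more explicit about the beta symmetry $\mathcal{B}_m^{\beta}[b,a] = \mathcal{B}_m^{\beta}[a,b]$ and the domain check $\mathbf{0} < \mathbf{I}_m - \mathbf{\Omega} < \mathbf{I}_m$, which the paper leaves implicit.
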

\begin{proof}
  The results follow from theorems \ref{teo1} and \ref{teo3} by the elemental properties
  $$
    P(\mathbf{U} > \mathbf{\Omega}) = P(-\mathbf{U} < -\mathbf{\Omega}) = P(\mathbf{I}_{m}-\mathbf{U} < \mathbf{I}_{m}-\mathbf{\Omega}).
  $$
\end{proof}

\subsection{Matricvariate beta type II distribution}

\begin{theorem}\label{teo4}
  Let $\mathbf{F} \sim \mathcal{F}_{m}^{\beta}(a,b)$ then its lower probability is 
  $$
    P(\mathbf{F} < \mathbf{\nabla}) = \frac{\mathcal{B}_{m}^{\beta}[a, (m-1)\beta/2 +1]}{\mathcal{B}_{m}^{\beta}[a,b]} |\mathbf{\nabla}|^{a} \hspace{5cm}
  $$
  $$
    \hspace{3cm}
    \times {}_{2}F_{1}^{\beta}(a+b,a;a+(m-1)\beta/2+1;-\mathbf{\nabla}).
  $$
  Alternatively, with the Euler relation (\ref{k1}), we obtain
  $$
    P(\mathbf{F} < \mathbf{\nabla}) = \frac{\mathcal{B}_{m}^{\beta}[a, (m-1)\beta/2 +1]}{\mathcal{B}_{m}^{\beta}[a,b]} |\mathbf{\nabla}|^{a} |\mathbf{I}_{m}+\mathbf{\nabla}|^{-(a+b-(m-1)\beta/2 -1)} \hspace{4cm}
  $$
  $$
    \hspace{2cm}
    \times {}_{2}F_{1}^{\beta}(-(b-(m-1)\beta/2-1),(m-1)\beta/2+1;a+(m-1)\beta/2+1;-\mathbf{\nabla}).
  $$
  Also, observing that $|\mathbf{\nabla}|^{a}|\mathbf{I}_{m}+\mathbf{\nabla}|^{-a} = |\mathbf{I}_{m}+\mathbf{\nabla}^{-1}|^{a}$ and by Euler relation (\ref{k2}) we have
  $$
    P(\mathbf{F} < \mathbf{\nabla}) = \frac{\mathcal{B}_{m}^{\beta}[a, (m-1)\beta/2 +1]}{\mathcal{B}_{m}^{\beta}[a,b]}  |\mathbf{I}_{m}+\mathbf{\nabla}^{-1}|^{-a}\hspace{5cm}
  $$
  $$
    \hspace{1cm}
    \times {}_{2}F_{1}^{\beta}\left(-(b-(m-1)\beta/2-1),a;a+(m-1)\beta/2+1;\left(\mathbf{I}_{m}+\mathbf{\nabla}^{-1}\right)^{-1}\right).
  $$
\end{theorem}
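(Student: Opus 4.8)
The plan is to reduce the type II problem to the type I problem already solved in Theorem \ref{teo3}, and then to read off the three displayed expressions by acting on the resulting hypergeometric function with the Euler relations (\ref{k1}) and (\ref{k2}). The bridge between the two distributions is exactly the change of variable $\mathbf{R} = (\mathbf{I}_m - \mathbf{S})^{-1} - \mathbf{I}_m$ used to pass between the two integral representations of $\mathcal{B}_m^\beta[a,b]$. Concretely, I would set $\mathbf{U} = \mathbf{I}_m - (\mathbf{I}_m + \mathbf{F})^{-1} = \mathbf{F}(\mathbf{I}_m + \mathbf{F})^{-1}$, with inverse $\mathbf{F} = (\mathbf{I}_m - \mathbf{U})^{-1} - \mathbf{I}_m$. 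The same Jacobian computation that links the densities (\ref{dfbI}) and (\ref{dfbII}) — namely Proposition \ref{prop1} ii) applied to $\mathbf{I}_m - \mathbf{U}$ — shows that if $\mathbf{F} \sim \mathcal{F}_m^\beta(a,b)$ then $\mathbf{U} \sim \mathfrak{B}_m^\beta(a,b)$.

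The key step is the transfer of the matrix event. Since $t \mapsto (1+t)^{-1}$ is operator monotone decreasing on the positive half-line, the map $\mathbf{F} \mapsto \mathbf{U} = \mathbf{I}_m - (\mathbf{I}_m + \mathbf{F})^{-1}$ is an operator monotone increasing bijection of $\mathfrak{P}_m^\beta$ onto the order interval $(\mathbf{0}, \mathbf{I}_m)$, and its inverse is operator monotone as well. Hence $\mathbf{0} < \mathbf{F} < \mathbf{\nabla}$ holds if and only if $\mathbf{0} < \mathbf{U} < \mathbf{\Omega}$, where $\mathbf{\Omega} = \mathbf{\nabla}(\mathbf{I}_m + \mathbf{\nabla})^{-1}$, so that $P(\mathbf{F} < \mathbf{\nabla}) = P(\mathbf{U} < \mathbf{\Omega})$. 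This is the one point that requires genuine care: the nonlinear matrix map must be shown to preserve the L\"owner order in both directions, so that the two regions coincide as sets and not merely up to an inclusion. Since $\mathbf{\Omega}$ has eigenvalues $\lambda_i/(1+\lambda_i) \in (0,1)$ we have $\mathbf{0} < \mathbf{\Omega} < \mathbf{I}_m$, and Theorem \ref{teo3} applies.

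Feeding $\mathbf{\Omega} = \mathbf{\nabla}(\mathbf{I}_m + \mathbf{\nabla})^{-1}$ into Theorem \ref{teo3}, I would record the elementary identities $|\mathbf{\Omega}|^a = |\mathbf{\nabla}|^a |\mathbf{I}_m + \mathbf{\nabla}|^{-a}$, $\mathbf{I}_m - \mathbf{\Omega} = (\mathbf{I}_m + \mathbf{\nabla})^{-1}$, and $\mathbf{\Omega}(\mathbf{I}_m - \mathbf{\Omega})^{-1} = \mathbf{\nabla}$. Rewriting the type I hypergeometric as ${}_2F_1^\beta(-b+(m-1)\beta/2+1, a; a+(m-1)\beta/2+1; \mathbf{\Omega})$ and applying the Euler relation (\ref{k2}) with the second parameter taken to be $a$ converts the argument $\mathbf{\Omega}$ into $-\mathbf{\Omega}(\mathbf{I}_m - \mathbf{\Omega})^{-1} = -\mathbf{\nabla}$, turns the first parameter into $a+b$, and produces the factor $|\mathbf{I}_m - \mathbf{\Omega}|^{-a} = |\mathbf{I}_m + \mathbf{\nabla}|^{a}$. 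This factor cancels the $|\mathbf{I}_m + \mathbf{\nabla}|^{-a}$ carried by $|\mathbf{\Omega}|^a$ and leaves exactly $|\mathbf{\nabla}|^a\,{}_2F_1^\beta(a+b, a; a+(m-1)\beta/2+1; -\mathbf{\nabla})$, i.e. the first displayed form.

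The two remaining expressions then follow by acting on this first form with the Euler relations. Applying (\ref{k1}) to ${}_2F_1^\beta(a+b, a; a+(m-1)\beta/2+1; -\mathbf{\nabla})$ produces exponent $c-(a+b)-a = -(a+b-(m-1)\beta/2-1)$ on $|\mathbf{I}_m + \mathbf{\nabla}|$ and new numerator parameters $c-(a+b) = -(b-(m-1)\beta/2-1)$ and $c-a = (m-1)\beta/2+1$, which is the second form. Applying (\ref{k2}) instead (again with second parameter $a$) gives the factor $|\mathbf{I}_m + \mathbf{\nabla}|^{-a}$, argument $\mathbf{\nabla}(\mathbf{I}_m + \mathbf{\nabla})^{-1} = (\mathbf{I}_m + \mathbf{\nabla}^{-1})^{-1}$, and first parameter $-(b-(m-1)\beta/2-1)$; combining the prefactors via $|\mathbf{\nabla}|^a|\mathbf{I}_m + \mathbf{\nabla}|^{-a} = |\mathbf{I}_m + \mathbf{\nabla}^{-1}|^{-a}$ yields the third form. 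The only real obstacle is the set-theoretic equivalence of the two matrix inequality regions established in the second paragraph; everything afterward is bookkeeping with the generalised Pochhammer parameters and the determinant prefactors.
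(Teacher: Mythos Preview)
Your argument is correct, but it follows a genuinely different route from the paper's. The paper does \emph{not} reduce to Theorem~\ref{teo3}; instead it works directly with the type~II density, performs the linear (congruence) substitution $\mathbf{F}=\mathbf{\nabla}^{1/2}\mathbf{R}\mathbf{\nabla}^{1/2}$ so that the event $\{\mathbf{0}<\mathbf{F}<\mathbf{\nabla}\}$ becomes $\{\mathbf{0}<\mathbf{R}<\mathbf{I}_m\}$, expands $|\mathbf{I}_m+\mathbf{\nabla}\mathbf{R}|^{-(a+b)}$ as a ${}_1F_0^\beta$ series in Jack polynomials, and integrates term by term using the integral identity for $\int_{\mathbf{0}<\mathbf{R}<\mathbf{I}_m}|\mathbf{R}|^{a-(m-1)\beta/2-1}C_\kappa^\beta(-\mathbf{\nabla}\mathbf{R})\,(d\mathbf{R})$. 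The second and third forms are then obtained exactly as you do, via the Euler relations (\ref{k1}) and (\ref{k2}).

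The trade-off between the two routes is this. The paper's congruence substitution is linear, so the transfer of the L\"owner order $\{\mathbf{F}<\mathbf{\nabla}\}\leftrightarrow\{\mathbf{R}<\mathbf{I}_m\}$ is immediate and holds uniformly for every $\beta$, at the cost of redoing a Jack-polynomial series computation that essentially repeats the work behind Theorem~\ref{teo3}. Your route is more economical---it recycles Theorem~\ref{teo3} wholesale and needs only the Euler relation (\ref{k2}) to land on the first displayed form---but its one nontrivial input is the operator monotonicity of $\mathbf{F}\mapsto\mathbf{I}_m-(\mathbf{I}_m+\mathbf{F})^{-1}$ (equivalently, antitonicity of inversion on the cone $\mathfrak{P}_m^\beta$). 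That fact is standard on symmetric cones and hence valid across all four real normed division algebras, but it is an external ingredient the paper never invokes; you are right to flag it as the step requiring care.
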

\begin{proof}
  The distribution function of $\mathbf{F}$ is written as
  $$
    P(\mathbf{F} < \mathbf{\nabla}) = \frac{1}{\mathcal{B}_{m}^{\beta}[a,b]}\int_{\mathbf{0}<\mathbf{F}<\mathbf{\nabla}}|\mathbf{F}|^{a-(m-1)\beta/2-1} |\mathbf{I}_{m}+\mathbf{F}|^{-(a+b)}(d\mathbf{F}).
  $$
  Define $\mathbf{F} = \mathbf{\nabla}^{1/2}\mathbf{R}\mathbf{\nabla}^{1/2}$, where $\mathbf{\nabla}^{1/2}$ is such that $\left(\mathbf{\nabla}^{1/2}\right)^{2} = \mathbf{\nabla}$. Then by Proposition \ref{prop1} i) we have that $(d\mathbf{F}) = |\mathbf{\nabla}|^{(m-1)\beta/2 +1}(d\mathbf{R})$. Also observe that $\mathbf{0}<\mathbf{R}<\mathbf{I}_{m}$, therefore
  $$
    P(\mathbf{F} < \mathbf{\nabla}) = \frac{|\mathbf{\nabla}|^{a}}{\mathcal{B}_{m}^{\beta}[a,b]} \int_{\mathbf{0}<\mathbf{R}<\mathbf{I}_{m}} |\mathbf{R}|^{a-(m-1)\beta/2-1} |\mathbf{I}_{m}+\mathbf{\nabla R}|^{-(a+b)}(d\mathbf{R}).
  $$
  Recall that 
  $$
    |\mathbf{I}_{m}-\mathbf{X}|^{-a} = {}_{1}F_{0}^{\beta}(a;\mathbf{X}) = \sum_{k=0}^{\infty}\sum_{\kappa}\frac{[a]_{\kappa}^{\beta}}{k!} C_{\kappa}^{\beta}(\mathbf{X}),
  $$
  where $ C_{\kappa}^{\beta}(\mathbf{X})$ denotes the Jack polynomials (also termed zonal polynomials), see \citet{S97}.
  Hence
  $$
    P(\mathbf{F} < \mathbf{\nabla}) = \frac{|\mathbf{\nabla}|^{a}}{\mathcal{B}_{m}^{\beta}[a,b]}  \sum_{k=0}^{\infty}\sum_{\kappa} \frac{[a+b]_{\kappa}^{\beta}}{k!} \int_{\mathbf{0}<\mathbf{R}<\mathbf{I}_{m}}\mathbf{R}|^{a-(m-1)\beta/2-1}C_{\kappa}^{\beta}(\mathbf{-\nabla R})(d\mathbf{R}).
  $$
  From \citet[Equation 3.30, p. 102 and Equation 2.4, p. 91]{dg:14} the desired result is obtained.
\end{proof}
\begin{corollary}\label{cor2}
  Suppose that $\mathbf{F} \sim \mathcal{F}_{m}^{\beta}(a,b)$ then 
  $$
    P(\mathbf{F} > \mathbf{\nabla}) = \frac{\mathcal{B}_{m}^{\beta}[b, (m-1)\beta/2 +1]}{\mathcal{B}_{m}^{\beta}[a,b]} |\mathbf{\nabla}|^{-b} \hspace{5cm}
  $$
  \begin{equation}\label{pvBII1}
    \hspace{3cm}
    \times {}_{2}F_{1}^{\beta}(a+b,b;b+(m-1)\beta/2+1;-\mathbf{\nabla}^{-1}).
  \end{equation}
  Or 
  $$
    P(\mathbf{F} > \mathbf{\nabla}) = \frac{\mathcal{B}_{m}^{\beta}[b, (m-1)\beta/2 +1]}{\mathcal{B}_{m}^{\beta}[a,b]} |\mathbf{\nabla}|^{-b} |\mathbf{I}_{m}+\mathbf{\nabla}^{-1}|^{-(a+b-(m-1)\beta/2 -1)} \hspace{4cm}
  $$
  \begin{equation}\label{pvBII2}
    \hspace{1cm}
    \times {}_{2}F_{1}^{\beta}(-(a-(m-1)\beta/2-1),(m-1)\beta/2+1;b+(m-1)\beta/2+1;-\mathbf{\nabla}^{-1}).
  \end{equation}
  And
  $$
    P(\mathbf{F} > \mathbf{\nabla}) = \frac{\mathcal{B}_{m}^{\beta}[b, (m-1)\beta/2 +1]}{\mathcal{B}_{m}^{\beta}[a,b]}  |\mathbf{I}_{m}+\mathbf{\nabla}|^{-b}\hspace{5cm}
  $$
  \begin{equation}\label{pvBII3}
    \times {}_{2}F_{1}^{\beta}\left(-(a-(m-1)\beta/2-1),b;b+(m-1)\beta/2+1;(\mathbf{I}_{m}+\mathbf{\nabla})^{-1}\right).
  \end{equation}
\end{corollary}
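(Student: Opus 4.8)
The plan is to mirror the derivation of Corollary~\ref{cor1} for the type~I case, replacing the complementation $\mathbf{U} \mapsto \mathbf{I}_{m} - \mathbf{U}$ by matrix inversion $\mathbf{F} \mapsto \mathbf{F}^{-1}$, and invoking Theorem~\ref{teo2} in place of Theorem~\ref{teo1}. The starting observation is the order-reversing property of inversion on the cone $\mathfrak{P}_{m}^{\beta}$: for $\mathbf{F}, \mathbf{\nabla} \in \mathfrak{P}_{m}^{\beta}$ one has $\mathbf{F} > \mathbf{\nabla} \Longleftrightarrow \mathbf{F}^{-1} < \mathbf{\nabla}^{-1}$. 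Hence, writing $\mathbf{R} = \mathbf{F}^{-1}$, one immediately obtains
$$
  P(\mathbf{F} > \mathbf{\nabla}) = P(\mathbf{R} < \mathbf{\nabla}^{-1}).
$$

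By Theorem~\ref{teo2}, $\mathbf{R} = \mathbf{F}^{-1} \sim \mathcal{F}_{m}^{\beta}(b,a)$, so the right-hand side is exactly a lower probability for a type~II matrix with parameters $(b,a)$, evaluated at $\mathbf{\nabla}^{-1}$. The next step is therefore to apply Theorem~\ref{teo4} after the substitution $a \leftrightarrow b$ and $\mathbf{\nabla} \mapsto \mathbf{\nabla}^{-1}$. Each of the three expressions for $P(\mathbf{F} < \mathbf{\nabla})$ in Theorem~\ref{teo4} then yields one of the three claimed forms. The remaining work is bookkeeping: use the symmetry $\mathcal{B}_{m}^{\beta}[b,a] = \mathcal{B}_{m}^{\beta}[a,b]$ established after (\ref{beta3}), together with $|\mathbf{\nabla}^{-1}|^{b} = |\mathbf{\nabla}|^{-b}$ and $(\mathbf{\nabla}^{-1})^{-1} = \mathbf{\nabla}$, to rewrite the prefactors and hypergeometric arguments in the stated form.

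Concretely, the first expression of Theorem~\ref{teo4} maps to (\ref{pvBII1}); the second expression, which rests on the Euler relation (\ref{k1}), maps to (\ref{pvBII2}), the determinantal factor $|\mathbf{I}_{m} + \mathbf{\nabla}^{-1}|^{-(a+b-(m-1)\beta/2-1)}$ being carried over directly; and the third expression, built on the Euler relation (\ref{k2}), maps to (\ref{pvBII3}) once $\mathbf{I}_{m} + (\mathbf{\nabla}^{-1})^{-1} = \mathbf{I}_{m} + \mathbf{\nabla}$ and $(\mathbf{I}_{m} + (\mathbf{\nabla}^{-1})^{-1})^{-1} = (\mathbf{I}_{m} + \mathbf{\nabla})^{-1}$ are substituted. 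No new series manipulation is needed beyond what Theorem~\ref{teo4} already supplies.

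The only genuinely nontrivial point — the place where I would expect to argue rather than compute — is the equivalence $\mathbf{F} > \mathbf{\nabla} \Longleftrightarrow \mathbf{F}^{-1} < \mathbf{\nabla}^{-1}$ in the Hermitian setting over a general real normed division algebra $\mathfrak{F}$. This is the analogue, for the Loewner order under inversion, of the elementary step $P(\mathbf{U} > \mathbf{\Omega}) = P(\mathbf{I}_{m}-\mathbf{U} < \mathbf{I}_{m}-\mathbf{\Omega})$ used in Corollary~\ref{cor1}. Over $\Re$, $\mathfrak{C}$ and $\mathfrak{H}$ it is the standard monotonicity of inversion on the positive definite cone; over $\mathfrak{O}$ one invokes the spectral calculus of octonionic Hermitian matrices, where the same order-reversal holds. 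Once this equivalence is in hand, the corollary is a direct transcription of Theorem~\ref{teo4} under $a \leftrightarrow b$ and $\mathbf{\nabla} \mapsto \mathbf{\nabla}^{-1}$.
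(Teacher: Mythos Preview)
Your proof is correct and follows exactly the paper's approach: the paper's entire argument is the one-line observation $P(\mathbf{F} > \mathbf{\nabla}) = P(\mathbf{F}^{-1} < \mathbf{\nabla}^{-1})$ together with an appeal to Theorems~\ref{teo2} and~\ref{teo4}. You have simply supplied the bookkeeping details (the $a\leftrightarrow b$, $\mathbf{\nabla}\mapsto\mathbf{\nabla}^{-1}$ substitutions and the identification of each of the three forms) and a remark on the Loewner-order monotonicity of inversion that the paper leaves implicit.
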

\begin{proof}
Observing that
$$
   P(\mathbf{F} > \mathbf{\nabla}) =  P(\mathbf{F}^{-1} < \mathbf{\nabla}^{-1})
$$
the proof is a consequence of theorems \ref{teo2} and \ref{teo4}.
\end{proof}

\subsection{Computation}
This work and similar research of the authors are inscribed into a very profound problem of matrix variate distribution theory related with a feasible computation. The foundations of the MANOVA probability setting in this paper start in the 50's around the difficult problem of finding the joint density function of the latent roots of an $\mathbf{X} \in \mathfrak{S}_{m}^{\beta}$ with probability density function $f(\mathbf{X})$. At that time, the theory of real normed division algebras did not exists, then the real case ($\beta=1$) came first and then without any relation the complex case ($\beta=2$) was independently and hardly constructed.  In the real case, the addressed joint distribution requires the following integral over the invariant normalised Haar probability measure  $(d\mathbf{H})$:
$$
   \int_{O(m)}f(\mathbf{H}\mathbf{L}\mathbf{H}')(d\mathbf{H}).
$$
Even in the Gaussian central kernel $f$, the solution demanded the creation in \citet{j:60} of the so called zonal or James polynomials of one matrix argument $C_{\kappa}^{1}(\mathbf{X})$. A.T. James considered a number methods for computing the polynomials, but the most efficient technique arrived with \citet{j:68} by establishing a crucial recurrent method via the Laplace-Beltrami operator. But the task for computation of low order polynomials was so extreme that a Ph. thesis was needed for constructing the polynomials up degree 12th (\citet{pj:74}). The integral and related works for the central complex case required a parallel theory and took much time. It started in \citet{j:64}, and a similar work to Parkust thesis without a recurrence method was performed by F. Caro-Lopera for obtaining the $C_{\kappa}^{1}(\mathbf{X})$ and some related functions, see \citet{cn:06}, \citet{gncl:06}, \citet{gncl:05}. The construction of the complex zonal polynomials  by the Laplace-Beltrami operator appeared later in \citet{dgcl:07}. As in the complex real case (\citet{j:68}), a separatelly trial of getting an exact formulae only arrived for the second order in \citet{cldg:06}. From a numerical point of view the computation of hypergeometric functions, which are series of zonal polynomials, was given by \citet{ke:06}, then all the works since 60's about the central matrix variate theory via James polynomials were numerically approximated. The addressed work was also set for real normed division algebras based on the so called Jack polynomials by \citet{S97} $C_{\kappa}^{1}(\mathbf{X})$ which includes the real and complex zonal polynomials, but also the new quaternions and octonions. Exact formulae for the Jack polynomials are so elusive, in fact, only the second order case has been solve by  It should be noted that \citet{S97} and \citet{ke:06} are prescribed for the definite positive case, the unified positive and semidefinite positive real setting was given by \citet{dgcl:06}. Applications of the semidefinite positive approach are still to research.
Now, the central case for the addressed general kernel $f(\cdot)$, in order to obtain the joint distribution of the latent roots of an elliptically contoured distributions, was given by \citet{clgb:14} in terms of computable series of $C_{\kappa}^{1}(\mathbf{X})$. The generalization to real normed division algebras, with the corresponding computable series involving $C_{\kappa}^{\beta}(\mathbf{X})$, was provided by  \citet{dg:13}. 

Finally, the main problem of computation for possible extensions of the probabilities derived here arrives in the non central real case. It forces the apparition in \citet{d:79} and \citet{d:80} of the termed Davis or invariant polynomials of matrix several matrix arguments $C_{\phi}^{\kappa[r],\beta}$, $\beta=1$, extending the real zonal polynomials of one matrix argument $C_{\kappa}^{1}(\mathbf{X})$, see \citet{dg:14} for general case. 
\citet{d:79} and \citet{d:06} maintained the conjecture that they could be obtained in a recurrent way as the zonal polynomials, however \citet{cl:16} proved that impossibility. Then, until now, it has left the problem of computation of dozens of papers without a plausible computation.
Fortunately, all the probabilities here derived involves zonal polynomials, which are easily computable by using the approximation of \citet{ke:06}.

\section{New tests on matrix variate distributions}\label{sec:6} 

The expected matrix $p$-value arises naturally in this section by providing a new approach for testing the general multivariate linear hypothesis. To motivate the result, we review most of the statistical literature approaches. The test statistics for all the known criteria are showed in  tables \ref{tab1} and \ref{tab2}. 

\begin{table}[!htp]
 \centering
   \caption{\scriptsize Criteria for testing the null hypothesis}\label{tab1}
\medskip   
\begin{tiny}
\begin{minipage}[t]{360pt}
\begin{tabular}{||l|l|l||} \hline\hline
    &&\\
    Criterion & Statistics & References\\ && \\ \hline\hline
    &&\\
    Wilks's $\Lambda$ \footnote{\tiny The decision rule for all the criteria is: \textbf{reject $H_{0}$ if the statistic $\geq$ critical value.}
                         However, for Wilks's $\Lambda$ and Pillai's $W^{(s)}$ criteria, the decision rule is (this class of test are known in statistical literature as \textbf{inverse test}, see \citet[p. 162]{re:02}): \textbf{reject $H_{0}$ if the statistic $\leq$ critical value.}} 
                         & $\begin{array}{ccl}
                           \Lambda &=& \displaystyle\frac{|\mathbf{S}_{E}|}{|\mathbf{S}_{E} + \mathbf{S}_{H}|} \\
                             &=& \displaystyle\prod_{i = 1}^{s} \frac{1}{1 + \lambda_{i}} \\
                             &=& \displaystyle\prod_{i=1}^{s}(1-\theta_{i}). 
                         \end{array}
                         $ 
                         & \begin{tabular}{l}
                             see \citet{w:32}, \\
                             \citet[p. 161]{re:02} \\
                             and \citet[p. 5 and pp. 14-51]{k:83}.
                           \end{tabular} \\ &&\\ \hline
  &&\\                           
    \begin{tabular}{l}
      Wilks's $U$ \\
      Gnanadesikan's $U$ 
    \end{tabular}        & $\begin{array}{ccl}
                           U &=& \displaystyle\frac{|\mathbf{S}_{H}|}{|\mathbf{S}_{E} + \mathbf{S}_{H}|} \\
                             &=& \displaystyle\prod_{i=1}^{s}\frac{\lambda_{i}}{1 + \lambda_{i}}\\
                             &=& \displaystyle\prod_{i =1}^{s} \theta_{i}. 
                         \end{array}
                         $ 
                         & \begin{tabular}{l}
                             \citet[p. 72]{rgs:71}, \\
                             \citet[p. 413]{s:84},\\
                             and \citet[p. 6]{k:83}.
                           \end{tabular}  \\ &&\\ \hline
  &&\\                           
  \begin{tabular}{l}
      Wilks's $V$ \\
      Olson's $V$ 
    \end{tabular}        & $\begin{array}{ccl}
                           V &=& \displaystyle\frac{|\mathbf{S}_{H}|}{|\mathbf{S}_{E}|} \\
                             &=& \displaystyle\prod_{i=1}^{s}\lambda_{i}\\
                             &=& \displaystyle\prod_{i =1}^{s} \frac{\theta_{i}}{(1 - \theta_{i})}. 
                         \end{array}
                         $ 
                         & \begin{tabular}{l}
                             \citet{w:32}, \\
                             \citet{ol:74},\\
                             \citet[p. 8]{k:83},\\
                             and \citet{dgcl:08} 
                           \end{tabular}  \\ &&\\ \hline
  &&\\                           
  Lawley-Hotelling's $U^{(s)}$  & $\begin{array}{ccl}
                           U^{(s)} &=& \tr (\mathbf{S}_{E}^{-1} \mathbf{S}_{H}) \\
                             &=& \displaystyle\sum_{i=1}^{s}\lambda_{i} \\
                             &=& \displaystyle\sum_{i=1}^{s} \frac{\theta_{i}}{(1 - \theta_{i})}.  
                         \end{array}
                         $                         
                         & \begin{tabular}{l}
                             see \citet[p. 466]{mh:05}, \\
                             \citet[p. 167]{re:02} \\
                             and \citet[p. 6 and pp. 118-135]{k:83}.
                           \end{tabular}  \\ &&\\ \hline
  &&\\                            
  Pillai's $V^{(s)}$  & $\begin{array}{ccl}
                           V^{(s)} &=& \tr ((\mathbf{S}_{E}+ \mathbf{S}_{H})^{-1}\mathbf{S}_{H}) \\
                             &=& \displaystyle\sum_{i=1}^{s}\frac{\lambda_{i}}{(1+\lambda_{i})} \\
                             &=& \displaystyle\sum_{i=1}^{s}\theta_{i}.  
                         \end{array}
                         $                                                  
                         & \begin{tabular}{l}
                             see \citet[p. 466]{mh:05}, \\
                             \citet[p. 168]{re:02} \\
                             and \citet[p. 6 and pp. 136-153]{k:83}.
                           \end{tabular}  \\ &&\\ \hline
  &&\\                           
  Pillai's $W^{(s)}$  & $\begin{array}{ccl}
                           W^{(s)} &=&  \tr ((\mathbf{S}_{E}+ \mathbf{S}_{H})^{-1}\mathbf{S}_{E}) \\
                             &=& \displaystyle\sum_{i=1}^{s}\frac{1}{(1+\lambda_{i})} =\sum_{i=1}^{s}(1-\theta_{i}) \\
                             &=& (1 - V^{(s)}/s).  
                         \end{array}
                         $                               
                         & \citet{p:55}.  \\ &&\\ \hline
  &&\\                            
  Pillai's $H^{(s)}$  & $\begin{array}{ccl}
                           H^{(s)} &=&  \displaystyle\frac{s}{\displaystyle\sum_{i=1}^{s}(1+\lambda_{i})} \\
                             &=& s\left \{\displaystyle\sum_{i=1}^{s} (1-\theta_{i})^{-1}\right \}^{-1} \\
                             &=& (1 + U^{(s)}/s)^{-1}.  
                         \end{array}
                         $                                                       
                         & \begin{tabular}{l}
                             see \citet{p:55}, \\
                             and \citet[p. 8]{k:83}.
                           \end{tabular}  \\ &&\\                            
  \hline\hline
\end{tabular}
\end{minipage}
\end{tiny}
\end{table}

\begin{table}[!htp]
 \centering
   \caption{\scriptsize Continuation...}\label{tab2} 
\medskip   
\begin{tiny}
\begin{minipage}[t]{360pt}
\begin{tabular}{||l|l|l||} \hline\hline
    &&\\
    Criteria \footnote{\tiny The tables for critical values of all the criteria are tabulated in terms of the parameters $(m,\nu_{H},\nu_{E})$ or in terms of the parameters $(s,n,h)$,  where 
     $$
       s = \min(m,\nu_{H}), \ \ n = (|\nu_{H} - m|-1)/2 \ \mbox{ and } \ h = (\nu_{E}-m-1)/2.
     $$
     In general, the tables have been computed by assuming that $m \leq \nu_{H}$ and $m\leq \nu_{E}$. If $m > \nu_{H}$ then use the combination of parameters $(\nu_{H}, m, \nu_{E}+\nu_{H}-m)$ in place of $(m,\nu_{H},\nu_{E})$,   see \citet[eq. (7), p. 455]{mh:05}, \citet[p. 96]{sk:79} or \citet[p. 167]{re:02}.} 
    & Statistics & References\\ && \\ \hline\hline
    &&\\
  Pillai's $R^{(s)}$ \footnote{\tiny Where $U^{'(s)}$ is the same $U^{(s)}$ but with $m$ and $h$ interchanged.} & $\begin{array}{ccl}
                           R^{(s)} &=&  \displaystyle\frac{s}{\displaystyle\sum_{i=1}^{s}\frac{1 + \lambda_{i}}{\lambda_{i}}}\\
                             &=& \displaystyle s\left \{\displaystyle\sum_{i=1}^{s}\theta_{i}^{-1}\right \}^{-1} \\
                             &=&  (1+U^{'(s)}/s)^{-1}. 
                         \end{array}
                         $                                             
                         & \begin{tabular}{l}
                             see \citet{p:55}, \\
                             and \citet[p. 8]{k:83}.
                           \end{tabular} \\ &&\\ \hline
  Pillai's $T^{(s)}$  & $\begin{array}{ccl}
                           T^{(s)} &=&  \displaystyle s\left \{\sum_{i=1}^{s} \lambda_{i}^{-1}\right \}^{-1}\\
                             &=& \displaystyle \frac{s}{\displaystyle\sum_{i=1}^{s} \frac{1-\theta_{i}}{\theta_{i}}} \\
                             &=&   \displaystyle\frac{R^{(s)}}{1-R^{(s)}}.
                         \end{array}
                         $                                 
                         & \begin{tabular}{l}
                             see \citet{p:55}, \\
                             and \citet[p. 8]{k:83}.
                           \end{tabular}  \\ &&\\ \hline
  &&\\                                                 
    Roy's $\lambda_{\mbox{max}}$ &  $\begin{array}{ccl}
                           \lambda_{\mbox{max}} &=&  \lambda_{\mbox{max}}(\mathbf{S}_{E}^{-1} \mathbf{S}_{H})\\[2ex]
                             &=& \displaystyle \frac{\theta_{\mbox{max}}}{1-\theta_{\mbox{max}}}.
                         \end{array}
                         $
                         & \begin{tabular}{l}
                             see \citet{r:57}, \\
                             and \citet[p. 7 and pp. 62-86]{k:83}.
                           \end{tabular}  \\ &&\\ \hline
  &&\\                           
    Roy's $\theta_{\mbox{max}}$ & $\begin{array}{ccl}
                           \theta_{\mbox{max}} &=&  \theta_{\mbox{max}}(\mathbf{S}_{E}+ \mathbf{S}_{H})^{-1}\mathbf{S}_{H})\\[2ex]
                             &=& \displaystyle \frac{\lambda_{\mbox{max}}}{1+\lambda_{\mbox{max}}}.
                         \end{array}
                         $ 
                         & \begin{tabular}{l}
                             see \citet{r:57}, \\
                             \citet[p. 481]{mh:05},\\
                             and p. 7, pp. 52-61, 87-104 and 105-117\\
                             in \citet{k:83}.
                           \end{tabular}  \\ &&\\ \hline
  &&\\                                                      
    Anderson's $\lambda_{\mbox{min}}$ &  $\begin{array}{ccl}
                           \lambda_{\mbox{min}} &=&  \lambda_{\mbox{min}}(\mathbf{S}_{E}^{-1} \mathbf{S}_{H})\\[2ex]
                             &=& \displaystyle \frac{\theta_{\mbox{min}}}{1-\theta_{\mbox{min}}}..
                         \end{array}
                         $
                         & \begin{tabular}{l}
                             see \citet{r:57},\\
                             \citet{a:82},\\
                             and \citet[p. 7]{k:83}.
                           \end{tabular}  \\ &&\\ \hline
  &&\\                           
    Roy's $\theta_{\mbox{min}}$ & $\begin{array}{ccl}
                           \theta_{\mbox{min}} &=&  \theta_{\mbox{min}}(\mathbf{S}_{E}+ \mathbf{S}_{H})^{-1}\mathbf{S}_{H})\\[2ex]
                             &=& \displaystyle \frac{\lambda_{\mbox{min}}}{1+\lambda_{\mbox{min}}}.
                         \end{array}
                         $
                         & \begin{tabular}{l}
                             see \citet{p:55},\\
                             \citet{n:48},\\
                             and \citet{r:57}.
                           \end{tabular}  \\ &&\\ \hline
  &&\\                           
    Dempster's $T_{D}$ & $T_{D} = (\tr \mathbf{S}_{H})/(\tr \mathbf{S}_{E}),$ 
                        & \begin{tabular}{l}
                             see \citet{d:58},\\
                             \citet{d:60},\\
                             and \citet{fhw:04}.
                           \end{tabular}  \\ &&\\                            
\hline\hline
\end{tabular}
\end{minipage}
\end{tiny}
\end{table}

Now, in terms of the general linear hypothesis, in the univariate case ($m = 1$), we have that \footnote{Remember that this decision rule is obtained via the generalised likelihood ratio test, see \citet[Definition 2.8.4, p. 85 and p. 185]{g:76}.} 
$$
  \mbox{reject }H_{0}:\mathbf{C}\mathbb{B}=\mathbf{h}  \mbox{ if } F_{c}\geq F_{t},
$$
where $F_{c}$ is termed $F$-calculated, and is given by
$$ 
  F_{c} = \frac{\displaystyle\frac{SSH}{\nu_{H}}}{\displaystyle\frac{SSE}{\nu_{E}}}
$$
$SSH$ denotes the sum of squares due to the hypothesis and $SSE$ denotes the sum of squares due to the error. Here $F_{t}\equiv F_{\alpha, \nu_{H}, \nu_{E}}$ is the upper $\alpha$ probability point of the F-distribution with $\nu_{H}$ and $\nu_{E}$ degrees of freedom. Alternatively $H_{0}$ is rejected if $P(F >F_{c}) \equiv \mathbf{p}-$\textbf{value} is less than a certain preset value (usually 0.05 or 0.01).

Extension of rejecting the null hypothesis $H_{0}:\mathbf{C}\mathbb{B}\mathbf{M}=\mathbf{H}$ into the multivariate general hypothesis setting, arise naturally if we \textbf{\textit{heuristically}} set the decision rule as 
$$
  \mbox{reject }H_{0}:\mathbf{C}\mathbb{B}\mathbf{M}=\mathbf{H}  \mbox{ if } \mathbf{F}_{c} > \mathbf{F}_{t},
$$
and now $\mathbf{F}_{c} = \mathbf{S}_{E}^{-1}\mathbf{S}_{H}$ or the symmetric form is taken:  
$$
  \mathbf{F}_{c} = \mathbf{S}_{E}^{-1/2}\mathbf{S}_{H}\mathbf{S}_{E}^{-1/2} = \mathbf{S}_{H}^{1/2}\mathbf{S}_{E}^{-1}\mathbf{S}_{H}^{1/2}.
$$
Under null hypothesis, $\mathbf{F}_{c}$ has a beta type II distribution with $\nu_{H}$ and $\nu_{E}$ parameters, i.e. $\mathbf{F}_{c} \sim \mathcal{F}_{g}^{\beta}(\nu_{H},\nu_{E})$, see \citet[Theorem 10.4.1, p. 449]{mh:05} and \citet{j:64}. Or
$$
  \mathbf{U}_{c} = \left(\mathbf{S}_{H} + \mathbf{S}_{E}\right)^{-1/2}\mathbf{S}_{H}\left(\mathbf{S}_{H} + \mathbf{S}_{E}\right)^{-1/2} = \mathbf{S}_{H}^{1/2}\left(\mathbf{S}_{H} + \mathbf{S}_{E}\right)^{-1}\mathbf{S}_{H}^{1/2},
$$
where $\mathbf{U}_{c} \sim \mathfrak{B}_{g}^{\beta}(\nu_{H},\nu_{E})$, namely, under the null hypothesis, we obtain a beta type I distribution  with parameters $\nu_{H}$ and $\nu_{E}$. Recall also that, $\mathbf{U}_{c} = \mathbf{I}_{g}-(\mathbf{I}_{g}+ \mathbf{F}_{c})^{-1}$ and $\mathbf{F}_{c} = (\mathbf{I}_{g} - \mathbf{U}_{c})^{-1}- \mathbf{I}_{g}$, see \citet{sk:79}. 

The new approach just requires some insights about the explanation of the measure, via $p$-value, of the well known Loewner order, a plausible task which can be heuristically explained in the referred statement of \citet{p:55}.

Tables \ref{tab1} and \ref{tab2} just proposes different metrics to discern when $\mathbf{F}_{c} > \mathbf{F}_{t}$. If $\rho(\cdot)$ denotes a metric, then $\mathbf{F}_{c} > \mathbf{F}_{t}$ implies that  $\rho(\mathbf{F}_{c}) > \rho(\mathbf{F}_{t})$, but not the opposite.  The references consider the determinant, the trace, the maximum and minimum eigenvalue of the matrices $\mathbf{F}_{c}$ and $\mathbf{U}_{c}$. Dempster, for example emulates a quotient of the traces of $\mathbf{S}_{H}$ and $\mathbf{S}_{E}$ in analogy to the quotient of determinants proposed by Wilks. Alternatively, when such criteria are written in terms of the eigenvalues of $\mathbf{F}_{c}$ and $\mathbf{U}_{c}$ a number of new metrics raise. Writing the trace in terms of the eigenvalues, a proportional quantity to the eigenvalue arithmetic mean appears, this motivates metrics based on geometric mean or harmonic mean of the eigenvalues, see \citet{p:55}, etc.

Now we are in a position of proposing our new heuristic approach for the decision rule of the general multivariate linear hypothesis test.

\medskip
\begin{center}
\colorbox[gray]{0.8}{$
   \mbox{Reject }H_{0}:\mathbf{C}\mathbb{B}\mathbf{M}=\mathbf{H}  \mbox{ if } P(\mathbf{F} >\mathbf{F}_{c}) \equiv \mbox{ p-value}  < \alpha,
$}\\[1ex]
\end{center}
where the $p$-value is reached by corollaries \ref{cor1} or \ref{cor2}. As usual, $p<0.05$ and $p<0.01$ are typically considered as statistically significant and highly significant, respectively.
\medskip
\begin{remark}
By Theorem 5.3.1 in \citet[p. 182]{gv:93}, all the distribution functions and upper probability functions here derived are invariant under the family of elliptically contoured distributions. Thus, they coincide with the distributions under the normality assumption, see also\citet{fz:90}.
\end{remark}

\section{Applications}

For validation of our theory, we develop two classical examples: a multivariate one-way analysis of variance model and a balanced multivariate two-way fixed-effects analysis of variance model.

\begin{example}
Let us consider of \citet[Example 6.1.7, p. 171]{re:02} about the comparison of apples threes with 6 different rootstocks. The data arrived in the context of an experiment back to 1918-1934, where the following variables were registered: trunk girth at 4 and 15 years, in $mm \times 100$; extension growth at 4 years, in $m$; and, weight of tree above ground at 15 years, in $lb \times 1000$. From \citet[p. 170]{re:02} the symmetric version of $\mathbf{F}_{c} = \mathbf{E}^{-1/2}\mathbf{H}\mathbf{E}^{-1/2}$ is
$$
  \mathbf{F}_{c} = \begin{pmatrix*}[r]
             0.05322776  & -0.01487401 &  0.1982486 & 0.07238464\\
            -0.01487401  &  0.38103449 & -0.3317237 & 0.09765930\\
             0.19824861  & -0.33172370 &  1.6121905 & 0.42164487\\
             0.07238464  &  0.09765930 &  0.4216449 & 0.87498679
       \end{pmatrix*},
$$
with eigenvalues
$$
  (\lambda_{1},\lambda_{2},\lambda_{3},\lambda_{4}) = (1.875848, 0.7906445, 0.2289795, 0.02596715)
$$
 
Corollaries \ref{cor1} and \ref{cor2} provide the following rule of decision:
\begin{center}
   {\bf reject the null hypothesis by $\mathbf{p}$-value = 8.679157e-18.}
\end{center}
This decision coincides with the criteria of Wilks, Pillai, Lawley-Hotelling and Roy, calculated in \citet[pp. 172-173]{re:02}.

\begin{remark}
   For the application of the test criterion, we study carefully the corresponding hypergeometric function with one matrix argument:
   \begin{enumerate}
     \item Here the function (\ref{pvBII1}) depends on the argument $- \mathbf{\nabla}^{-1}$, where $||- \mathbf{\nabla}^{-1}||= 38.5102$. Therefore the corresponding hypergeometric series diverges.
     \item However, if any of the parameters $a_{1}, \dots,a_{p}$ is zero, the hypergeometric function terminates and sums 1.
     \item For practical purposes, the presence of a negative integer in the parameters $a_{1}, \dots,a_{p}$ forces the hypergeometric series to be a polynomial of degree $nm$, where $n = - a_{i}$ for some $i = 1, \dots, p$.
     \item In this example the function (\ref{pvBI2}) does not involve a negative integer parameter, then it converges slowly. This computational issue is addressed by \citet{ke:06}, as: "Several problems remain open, among them automatic detection of convergence, ..., and the best way to truncate the series". However, all the examples here considered reach convergence with a sufficient truncation.
     \end{enumerate}
\end{remark}
\end{example}

\begin{example}
Now, we study the randomised complete design with factorial arrangement $2\times4$ given in \citet[Example 6.5.2, p. 191]{re:02}.  The data appear in \citet[Table 6.6, p. 192]{re:02}. The experiment involved a $2 \times 4$ design with $4$ replications, for a total of 32 observation vectors. The factors were rotational velocity [$\mathbf{A}_{1}$ (fast) and $\mathbf{A}_{2}$ (slow)] and lubricants [four types, $\mathbf{B}_{1}, \mathbf{B}_{2}, \mathbf{B}_{3}$ and $\mathbf{B}_{4}$]. The experimental units were 32 homogeneous pieces of bar steel. Two variables were measured on each piece of bar steel: $y_{1}$ = ultimate torque and $y_{2}$ = ultimate strain. From \citet[p. 193]{re:02} the symmetric versions of $\mathbf{F}_{c\mathbf{A}}= \mathbf{E}^{-1/2}\mathbf{H_{A}}\mathbf{E}^{-1/2}$, $\mathbf{F}_{c\mathbf{B}} = \mathbf{E}^{-1/2}\mathbf{H_{B}}\mathbf{E}^{-1/2}$  and $\mathbf{F}_{c\mathbf{AB}} = \mathbf{E}^{-1/2}\mathbf{H_{AB}}\mathbf{E}^{-1/2}$ and their corresponding eigenvalues are
$$
  \mathbf{F}_{c\mathbf{A}} = \begin{pmatrix*}[r]
             0.273464 & 0.478255\\
             0.478255 & 0.836411
       \end{pmatrix*}, \quad (\lambda_{1\mathbf{A}}, \lambda_{2\mathbf{A}}) =(1.109875, 0.000000),
$$
$$
  \mathbf{F}_{c\mathbf{B}} = \begin{pmatrix*}[r]
            0.336837 & -0.160550\\
           -0.160550 &  0.100913
       \end{pmatrix*}, \quad (\lambda_{1\mathbf{B}}, \lambda_{2\mathbf{B}}) =(0.418102, 0.019648),
$$
and 
$$
  \mathbf{F}_{c\mathbf{AB}} = \begin{pmatrix*}[r]
            0.028637 & 0.027744\\
            0.027744 & 0.043918
       \end{pmatrix*}, \quad (\lambda_{1\mathbf{AB}}, \lambda_{2\mathbf{AB}}) =(0.065054, 0.007501),
$$
respectively.

To calculate the $p$-value for the factor $\mathbf{A}$, note that the rank of $\mathbf{F}_{c\mathbf{A}} = 1 < m = 2$; then the parameter substitutions of Table \ref{tab2} are needed. Then we obtain the next decision: 
\begin{center}
   {\bf reject the null hypothesis by $\mathbf{p}$-value = 2.765e-05.}
\end{center}
For factor $\mathbf{A}$ the $p$-value was evaluated using all expressions (\ref{pvBI1}) to (\ref{pvBII3}) and for the correctness of our theory, the same result was obtained.

The $p$-value for the hypothesis of factor $\mathbf{B}$ is obtained under expressions (\ref{pvBI1}) to (\ref{pvBII3}). In this case, the three parameters $a_{1}$, $a_{2}$ and $b_{1}$ are positive integers or fractions and expression (\ref{pvBII1}) diverges because $||-\mathbf{F}_{c\mathbf{B}}^{-1}|| = 50.894747 > 1$. Evaluation of the $p$-value via (\ref{pvBII2}) gives $||-\mathbf{F}_{c\mathbf{B}}^{-1}|| = 50.894747 > 1$, but in this case $a_{1} = 0$, and the series sums 1. Hence, the rule decision is: 
\begin{center}
   {\bf reject the null hypothesis by $\mathbf{p}$-value = 0.0119703.}
\end{center}

In this test, (\ref{pvBI2}) required larger truncation, since $a_{1},a_{2}$ and $b_{1}$ are positive fractions.

\medskip

Finally, for the $\mathbf{AB}$ interaction testing, (\ref{pvBII1}) diverges since $||-\mathbf{F}_{c\mathbf{AB}}^{-1}|| = 133.31874 > 1$ and  $a_{1}$, $a_{2}$ and $b_{1}$ are positive fractions. A similar situation occurs with (\ref{pvBII2}), since $||-\mathbf{F}_{c\mathbf{AB}}^{-1}|| = 133.31874 > 1$, but in this case the hypergeometric series sums 1, since $a_{1} =0$. The remaining expressions for the $p$-value converge. The decision rule is 
\begin{center}
   {\bf do not reject the null hypothesis by $\mathbf{p}$-value = 0.4291338.}
\end{center}

According to \citet[p. 194]{re:02} only the factor $\mathbf{A}$ has a highly significant effect under the Wilks's criterion. However, under Roy's criteria, the conclusions coincide exactly with those obtained in this article, that is: factor $\mathbf{A}$ has a highly significant effect, factor $\mathbf{B}$ has a significant effect, and factor $\mathbf{AB}$ has no significant effect. For completeness, Table \ref{table3} presents the MANOVA obtained with the R program version 4.3.3, \citet{r:24}. 

\begin{normalsize}
\begin{table}[!htp]
  \caption{MANOVA with Roy's Criterion}\label{table3}
  \centering
   \begin{tabular}{||c|c|c|c|c|c|l||}
     \hline\hline
      & Df & Roy & approx F & num Df & den Df & $Pr(>F)$ \\
      \hline\hline
     A & 1 & 1.10988 & 12.7636 & 2 & 23 & 0.0001867$^{***}$ \\
     B & 3 & 0.41810 & 3.3448 & 3 & 24 & 0.0359080$^{*}$ \\
     AB & 3 & 0.06505 & 0.5204 & 3 & 24 & 0.6733819 \\
     Residuals & 24 &   &   &   &   &   \\
     \hline\hline
   \end{tabular}
\end{table}
\end{normalsize}
\end{example}

Our criterion can be applied in several multivariate hypothesis testing situations. 

The two-sample univariate hypothesis $H_{0}: \sigma_{1}^{2}= \sigma_{2}^{2}$ versus $H_{1}: \sigma_{1}^{2} \neq \sigma_{2}^{2}$ is tested by computing
$$
  F = \frac{s_{1}^{2}}{s_{2}^{2}},
$$
where $s_{1}^{2}$ and $s_{2}^{2}$ are the variances of the two samples. Under $H_{0}$, and assuming normality, $F$ is distributed as $F(\nu_{1},\nu_{2})$, where $\nu_{1}$ and $\nu_{2}$ are the degrees of freedom of $s_{1}^{2}$ and $s_{2}^{2}$ (typically, $n_{1}-1$ and $n_{2}-1$). Note that $s_{1}^{2}$ and $s_{2}^{2}$ must be independent, which shall hold if the two samples are independent, see \citet[pp. 254-255]{re:02}. The rule of decision is:
\begin{center}
{\bf reject the null hypothesis $H_{0}$ if  $F > F_{t}$},
\end{center}
where $F_{t}\equiv F_{\alpha, \nu_{1}, \nu_{2}}$ is the upper $\alpha$ probability point of the F-distribution with $\nu_{1}$ and $\nu_{2}$ degrees of freedom.

Now, we propose the following multivariate version of equality of variances in terms of our test criterion.

For, two-sample multivariate hypothesis $H_{0}: \mathbf{\Sigma}_{1}= \mathbf{\Sigma}_{2}$ versus $H_{1}: \mathbf{\Sigma}_{1} \neq \mathbf{\Sigma}_{2}$, the following decision rule is proposed\medskip
\begin{center}
\colorbox[gray]{0.8}{$
   \mbox{Reject }H_{0}: \mathbf{\Sigma}_{1}= \mathbf{\Sigma}_{2}  \mbox{ if } P(\mathbf{F} >\mathbf{F}_{c}) \equiv \mbox{ p-value}  < \alpha,
$}\\[1ex]
\end{center}
where
$$
  \mathbf{F}_{c} = \mathbf{S}_{2}^{-1/2} \mathbf{S}_{1} \mathbf{S}_{2}^{-1/2},
$$
the $p$-value follow from corollaries \ref{cor1} or \ref{cor2}, and $\mathbf{S}_{1}$ and $\mathbf{S}_{2}$ are the sample variances-covarianzas matrices of the two samples. 
\medskip
\begin{example}
Four psychological tests were given to 32 men and 32 women. The data are recorded in \citet[Table 5.1, p. 125]{re:02}. The variables are
$y_{1}$ = pictorial inconsistencies, $y_{2}$ = paper form board, $y_{3}$ = tool recognition, and  $y_{4}$ = vocabulary, see \citet[Example 5.4.2, p124]{re:02}. We are interesting in  test the hypothesis $H_{0}: \mathbf{\Sigma}_{1}= \mathbf{\Sigma}_{2}$ versus $H_{1}: \mathbf{\Sigma}_{1} \neq \mathbf{\Sigma}_{2}$. The sample variance-covariance matrices $\mathbf{S}_{1}$ and $\mathbf{S}_{2}$ are given in \citet[Example 5.4.2, p. 124]{re:02}, from where the matrix $\mathbf{F}_{c}$ and its eigenvalues are given by 
$$
  \mathbf{F}_{c} = \begin{pmatrix*}[r]
            0.5164511 & -0.1089194  & 0.2211275  & 0.1108078\\
           -0.1089194 &  0.7934331  & -0.1813041 & 0.0948122\\
            0.2211275 & -0.1813041  &  0.9451825 & 0.1474816\\
            0.1108078 &  0.0948122  &  0.1474816 & 0.4676369
       \end{pmatrix*},
$$
$$
   (\lambda_{1}, \lambda_{2},\lambda_{3}, \lambda_{4}) =(1.1773492, 0.7635739, 0.4493134, 0.3324671).
$$ 
The six expressions (\ref{pvBI1}) to (\ref{pvBII3}) computed the $p$-value with the following results: the probabilities (\ref{pvBI1}), (\ref{pvBII1}) and (\ref{pvBII3}) diverge by different reasons, and the other three upper probabilities lead to the following decision rule: 
\begin{center}
   {\bf do not reject the null hypothesis by $\mathbf{p}$-value = 0.0585654.}
\end{center}

This decision based on exact matrix probability is not in agreement with \citet[pp. 258-259]{re:02}, because, the Rencher tests are based on approximations of the distributions of the three test statistics used there. Therefore, two clarifications are considered: i) The decision made in Rencher shall depend on the behavior of the approximations of the distributions of the test statistics in this particular example. ii) On the other hand, our $p$-value = 0.0585654, is very close to being significant, if we consider $\alpha = 0.05$. A  classical paradigm in hypothesis testing.
\end{example}

\section*{Conclusions}

This work has provided the unified theory of real normed division algebras for the foundations of distributional and matrix probabilities that launch a natural and promising definition of matrix $p-$values for diverse hypothesis testing, such as MANOVA. Testing equality of covariance matrices is also a feasible future application.




\begin{thebibliography}{}

    \bibitem[Anderson(1982)]{a:82} 
    Anderson, T. W. 1982. 
    An introduction to multivariate statistical Analysis. 
    2nd edition, John Wiley \& Sons, New York.

    \bibitem[Arias(2021)]{ar:21}
    Arias-Serna, M. A. 2021. 
    Mesures de risque matrice-variable, vecteur-variable et univari\'e et aspects connexes. 
    Th\`ese de doctorat. \'Ecole doctorale Math\'ematiques, informatique et t\'el\'ecommunications, 
    Universit\'e de Toulouse. 
    https://doctorat.univ-toulouse.fr/as/ed/cv.pl?mat=107879\&site=EDT
   
   \bibitem[Baez (2002)]{b:02}
   Baez, J. C. (2002),
   The octonions,
   Bull. Amer. Math. Soc. (N.S.), 39, 145--205.
    
    \bibitem[Caro-Lopera(2016)]{cl:16}
    Caro-Lopera, F. J. 2016.
    The impossibility of a recurrence construction of the invariant polynomials by using the Laplace-Beltrami operator.
    Far East J. Math. Sci. 100 (8), 1265--1288.

    \bibitem[Caro-Lopera et al.(2006)]{cldg:06}
    Caro-Lopera, F.J., D\'{\i}az-Garc\'{\i}a, J.A. and Gonza\'alez-Far\'{\i}as, G. 2006. 
    A formula for complex zonal polynomials of second order. 
    Rev. Mate.: Teor\'{\i}a y Aplicaciones, 13(1):35--39.
  
    \bibitem[Caro-Lopera et al.(2014)]{clgb:14}
    Caro-Lopera, F.J., González-Farías, G. and Balakrishnan, N. 2014. 
    On Generalized Wishart Distributions - I: Likelihood Ratio Test for Homogeneity of Covariance Matrices. 
    Sankhy\-a A 76, 179-–194.

    \bibitem[Caro-Lopera and Nagar (2006)]{cn:06} 
    Caro-Lopera, F. J. and Nagar, D. K. 2006. 
    Generalized binomial coefficients associated with the complex zonal polynomials. 
    Int. J. Pure App. Math. Academic Publisher, 30(41), 507--514.

    \bibitem[Constantine (1963)]{c:63}
    Constantine, A. G. 1963.
    Some non-central distribution problems in multivariate analysis.
    Ann. Math. Statist., 34, 1270--1285.
    
    \bibitem[Davis (1979)]{d:79}
    Davis, A. W. 1979. 
    Invariant polynomials with two matrix arguments extending the zonal polynomials: Applications to multivariate distribution theory.
    Ann. Inst. Stat. Math. 31, 465--485.

    \bibitem[Davis (1980)]{d:80} 
    Davis, A. W. 1980. 
    Invariant polynomials with two matrix arguments, extending the zonal polynomials.
    In Multivariate Analysis—V (ed. P. R. Krishnaiah), 287–299. 

    \bibitem[Davis (2006)]{d:06}
    Davis, A. W. 2006. 
    Polynomials of Matrix Arguments. 
    In: Kotz, S., Balakrishnan, N., Read, C. B., and Vidakovic, B. (Eds.), 
    Encyclopedia of Statistical Sciences, John Wiley and Sons, Hoboken, New Jersey.
    
    \bibitem[Dempster(1958)]{d:58} 
    Dempster, A. P. 1958. 
    A high dimensional two sample significance test. 
    Ann. Math. Statist., 29, 995--1010.

    \bibitem[Dempster(1960)]{d:60} 
    Dempster, A. P. 1960. 
    A significance test for the separation of two highly multivariate small samples. 
    Biometrics, 16, 41--50.
    
    \bibitem[D\'{\i}az-Garc\'{\i}a (2014)]{dg:13}
    D\'{\i}az-Garc\'{\i}a, J. A. 2013.
    Spherical ensembles.
    Linear Algebra Appl. 13(1), 83--124.
     
    \bibitem[D\'{\i}az-Garc\'{\i}a (2014)]{dg:14}
    D\'{\i}az-Garc\'{\i}a, J. A. 2014.
    Integral Properties of Zonal Spherical Functions, Hypergeometric Functions and Invariant Polynomials.
    J. Iran. Stat. Soc. (JIRSS), 13(1), 83--124.

    \bibitem[D\'{\i}az-Garc\'{\i}a and Caro-Lopera(2006)]{dgcl:06} 
    D\'{\i}az-Garc\'{\i}a, J. A., and Caro-Lopera, F. J. 2006. 
    An alternative approach for deriving the Laplace–Beltrami operador for the zonal polynomials of positive semidefinite and definite matrix argument. 
    Far East J. Math. Sci. 22,  273--290.

    \bibitem[D\'{\i}az-Garc\'{\i}a and Caro-Lopera(2007)]{dgcl:07}
    D\'{\i}az-Garc\'{\i}a, J.A., and Caro-Lopera, F. J. 2007. 
    Derivation of the Laplace–Beltrami operador for the zonal polynomials of positive definite hermitian matrix argument.  
    Appl. Math. Sci. 1,  191--200.
   
    \bibitem[D\'{\i}az-Garc\'{\i}a and Caro-Lopera(2008)]{dgcl:08} 
    D\'{\i}az-Garc\'{\i}a, J. A. and Caro-Lopera, F. J. 2008.
    About test criteria in multivariate analysis.
    Braz. J. Probab. Stat. 22(1), 1--25.
 
    \bibitem[D\'{\i}az-Garc\'{\i}a and Gonz\'alez-Far\'{\i}as(2005)]{dggf:05} 
    D\'{\i}az-Garc\'{\i}a, J. A. and Gona\'alez-Far\'{\i}as, G. 2005.
    Singular Random Matrix decompositions: Jacobians.
    J. Multivariate Anal. 93, 196--212.
    
    \bibitem[Dumitriu (2002)]{d:02}
    Dumitriu, I. (2002),
    Eigenvalue statistics for beta-ensembles.
    PhD thesis, Department of Mathematics,
    Massachusetts Institute of Technology, Cambridge, MA.
    
    \bibitem[ Ebbinghaus \emph{et al.}(1990)]{E:90}
    Ebbinghaus, H. D., Hermes, H. Hirzebruch, F., Koecher, M., Mainzer, K.,
    Neukirch, J., Prestel, A., and Remmert, R. 1990.
    Numbers.
    GTM/RIM 123, H.L.S. Orde, tr., Springer, New York.
    
    \bibitem[Edelman and Rao (2005)]{er:05}
    Edelman,A., and  Rao, R. R. 2005.
    Random matrix theory.
    Acta Numer. 14, 233--297.

    \bibitem[Fang and Zhang (1990)]{fz:90}
    Fang, K. T., Zhang, Y. T. 1990.
    Generalized Multivariate Analysis.
    Science Press, Springer-Verlag, Beijing.
    
    \bibitem[Fujikoshi \textit{et al.}(2004)]{fhw:04} 
    Fujikoshi, Y., Himeneo, T., and Wakaki, H. 2004. 
    Asymptotic results of a high dimensional MANOVA test and power comparation when the
    dimension is large compared to the sample size. 
    J. Japan Statist. Soc. 43(1), 19--26.
    
    \bibitem[Graybill(1976)]{g:76}
    Graybiill, F. A. 1976.
    Theory and application of th linear model.
    Wadsworth, California.
    
    \bibitem[Gross and Richards (1987)]{gr:87}
    Gross, K. I. and Richards, D. ST. P. 1987.
    Special functions of matrix argument I: Algebraic induction zonal polynomials and hypergeometric
    functions.
    Trans. Amer. Math. Soc. 301, 475--501.

    \bibitem[Gupta et al.(2005)]{gncl:05}
    Gupta, A. K., Nagar, D. K. and  Caro-Lopera, F. J. 2005. 
    Generalized Binomial Coefficients Associated with Invariant Polynomials of Two Hermitian Matrices. 
    Far East J. Math. Sci. 19(3), 305--318.

    \bibitem[Gupta et al.(2006)]{gncl:06}
    Gupta, A. K.,   Nagar, D. K. and  Caro-Lopera, F. J. 2006. 
    Complex generalized binomial coefficients. 
    Random Oper. Stoch. Equ. 14(2),143--156.
    
    \bibitem[Gupta and Varga (1993)]{gv:93}
    Gupta, A. K., and Varga, T. 1993.
    Elliptically Contoured Models in Statistics.
    Kluwer Academic Publishers, Dordrecht.
    
   \bibitem[Herz(1955)]{h:55}
   Herz, C. S. 1955.
   Bessel functions of matrix argument.
   Ann. Math. 61, 474--523.

   \bibitem[James(1960)]{j:60}
   James, A. T. 1960. 
   The Distribution of the latent roots of the covariance matrix. 
   Ann. Math. Statist. 31(1): 151--158.

   \bibitem[James (1964)]{j:64}
   James, A. T. 1964. 
   Distributions of matrix variates and latent roots derived from normal samples.
   Ann. Math. Statist. 35, 475--501.

   \bibitem[James(1968)]{j:68}
   James, A. T. 1968.
   Calculation of Zonal Polynomial Coefficients by Use of the Laplace-Beltrami Operator.
   Ann. Math. Statist. 39 (5), 1711--1718.
    
   \bibitem[Kabe(1984)]{k:84}
    Kabe, D. G. 1984.
    Classical statistical analysis based on a certain hypercomplex multivariate
    normal distribution.
    Metrika 31, 63--76. 
    
    \bibitem[Kres(1983)]{k:83} 
    Kress, H. 1983. 
    Statistical tables for multivariate analysis.  
    Springer- Verlag, New York.

   \bibitem[Koev and Edelman (2006)]{ke:06}
   Koev, E., and Edelman, A. 2006.
   The efficient evaluation of the hypergeometric function of a matrix argument.
   Math. Comp. 75 (254), 833--846.

   \bibitem[Li and Xue (2009)]{lx:09}
    Li, F. and Xue, Y. 2009.
    Zonal polynomials and hypergeometric functions of quaternion matrix argument.
    Comm. Statist. Theory Methods 38, 1184--1206.

   \bibitem[Muirhead(2005)]{mh:05}
   Muirhead, R. J. 2005.
   Aspects of Multivariate Statistical Theory.
   John Wiley \& Sons, New York.
   
   \bibitem[Nanda(1948)]{n:48} 
   Nanda, D. N. 1948. 
   Distribution of root of determinantal equation,
    Ann. Math. Statist. 19, 47--57.
   
   \bibitem[Olson(1974)]{ol:74} 
   Olson, Ch. L. 1974. 
   Comparative robustness of six test in multivariate analysis of variance. 
   JASA 69, 894-908.

  \bibitem[Parkhurst and James (1974)]{pj:74}
    Parkhurst, A. M., and James, A. T. 1974. 
    Zonal polynomials of order 1 through 12. In Selected Tables
    in Mathematical Statistics, Harter, H. L. and Owen, D. B. (Eds.), 2:199--388.
   
   \bibitem[Pillai(1955)]{p:55} 
   Pillai, K. C. S. 1955. 
   Some new test criteria in multivariate analysis.
   Ann. Math. Statist. 26, 117--121.

   \bibitem[Rencher(2002)]{re:02}
   Rencher, A. 2002.
   Methods of Multivariate Analysis.
   John Wiley \& Sons, New York.
   
   \bibitem[Roy(1957)]{r:57} 
   Roy, S. N. 1957. 
   Some Aspect of multivariate analysis.
   John Wiley \& Sons, New York.
   
   \bibitem[Roy \textit{et al}.(1971)]{rgs:71} 
    Roy, S. N., Gnanadesikan, R., and Srivastava, J. N. 1971. 
    Analysis and design of certain quantitative multiresponse experiments.
    Oxford: Pergamon Press Ltd.  
    
   \bibitem[R Core Team(2024)]{r:24}
    R Core Team 2024. 
    R: A language and environment for statistical computing. 
    R Foundation for Statistical Computing, Vienna, Austria. 
    URL https://www.R-project.org/.
   
   \bibitem[Sawyer(1997)]{S97}
   Sawyer, P. 1997.
   Spherical Functions on Symmetric Cones.
   Trans. Amer. Math. Soc. 349, 3569--3584.
   
   \bibitem[Seber(1984)]{s:84}  
   Seber, G. A. F. 1984. 
   Multivariate observations.
   John Wiley \& Sons, New  York.
   
   \bibitem[Srivastava and Khatri(1979)]{sk:79} 
   Srivastava, S. M., and Khatri, C. G. 1979. 
   An introduction to multivariate statistics. 
   North Holland, New York.
    
   \bibitem[Wilks(1932)]{w:32} 
   Wilks, S. S. 1932. 
   Certain generalizations in the analysis of variance. 
   Biometrika 24, 471--494.

 \end{thebibliography}
\end{document}